\newtheoremstyle{exampstyle}
  {\topsep} 
  {\topsep} 
  {\itshape} 
  {} 
  {\bfseries} 
  {.} 
  {.5em} 
  {} 
\theoremstyle{exampstyle}
\newtheorem{theorem}{Theorem}
\newtheorem{lemma}{Lemma}
\newtheorem{assumption}{Assumption}
\newtheorem{remark}{Remark}
\newtheorem{definition}{Definition}
\newtheorem{proposition}{Proposition}
\let\oldref\ref
\renewcommand{\ref}[1]{(\oldref{#1})}  
\renewcommand{\eqref}[1]{(\oldref{#1})}
\newbox\boxaddrone \newbox\boxaddrtwo
\def\D{\partial_t^\alpha}
\def\N+{n\in\mathbb{N}^{+}}
\def\A{\mathcal{A}}
\def\l{\langle}
\def\rd{\rangle_{L^2(D)}}
\def\K{\mathcal{K}}
\def\sp{\text{Span}}
\def\V{\mathbb{V}}
\def\E{\mathbb{E}}
\def\A{\mathcal{A}}
\def\I{I_t^\alpha}
\def\o{\omega}
\def\R{\mathbb{R}}
\def\Z{\mathbb{Z}}
\def\W{\mathbb{W}}
\def\Vc{\mathcal{V}}
\begin{document}

\title{\Large\textbf{Reconstruction of the time-dependent source term in a  
stochastic fractional diffusion equation}}
\author[1]{Chan Liu\thanks{chanliu16@fudan.edu.cn}}
\author[2]{Jin Wen\thanks{wenj@nwnu.edu.cn}}
\author[3]{Zhidong Zhang\thanks{zhidong.zhang@helsinki.fi}}
\affil[1]{\normalsize{School of Mathematical Sciences, Fudan University, China}}
\affil[2]{\normalsize{Department of Mathematics, Northwest Normal University, China}}
\affil[3]{\normalsize{Department of Mathematics and Statistics, University of Helsinki, Finland}}

\maketitle

\begin{abstract}
In this work, an inverse problem in the fractional diffusion equation  
with random source is considered. Statistical 
moments are used of the realizations of single point observation 
$u(x_0,t,\omega).$ We build the representation of the solution $u$ in 
integral sense, then prove some theoretical results as uniqueness and 
stability. After that, we establish a numerical algorithm to solve the 
unknowns, where the mollification method is used. \\\\
\text{Keywords}: inverse problem, stochastic fractional diffusion equation, 
 random source, Volterra integral equation, mollification.\\\\
\text{AMS subject classifications}: 35R11, 35R30, 65C30, 65M32.
\end{abstract}

\section{Introduction}
\subsection{Mathematical statement} \label{sec:1.1}
In this work, the following stochastic fractional diffusion equation is 
considered, 
 \begin{equation}\label{SDE}
  \begin{cases}
   \begin{aligned}
    \D u+\A u&=f(x)[g_1(t)+g_2(t)\dot{\W}(t)], 
    &&(x,t)\in D\times(0,T],\ \alpha\in(1/2,1),\\
    u(x,t)&=0, &&(x,t)\in \partial D\times(0,T],\\
    u(x,0)&=0, &&x\in D,
   \end{aligned}
  \end{cases}
 \end{equation}
in which $D\subset\mathbb{R}^d,\ 1\le d\le 3$ is bounded with 
sufficiently smooth boundary and 
$\D$ means the Djrbashyan-Caputo fractional derivative, 
\begin{equation*}
	\D \varphi = \frac{1}{\Gamma(1-\alpha)}
	\int_0^t(t-\tau)^{-\alpha}\varphi'(\tau)\,d\tau.
\end{equation*}
For the source term $f(x)g(t,\omega):=f(x)[g_1(t)+g_2(t)\dot{\W}(t)]$, 
the spatial component $f(x)$ 
is deterministic and known, and the unknown $g(t,\omega)$ contains uncertainty, 
where $\o$ is the random variable. 
In this paper, we consider the case that $g(t,\omega)=g_1(t)+g_2(t)\dot{\W}(t)$ 
be an Ito process, and the notation $\dot{\W}(t)$ means the white noise derived from 
the time dependent Brownian motion. For the derivation and properties of 
Ito process, see \cite{Bernt2003stochastic} for details. 
For the two unknowns $g_1,g_2$ in $g(t,\o)$, we try to use the 
statistical moments from the observation $u(x_0,t,\o)$  
to recover them. However, from the properties of $\W$, 
which are displayed in section \ref{section:preliminary}, 
the sign of $g_2$ can not effect the stochastic process $g_2(t)\dot{\W}(t)$. Sequentially, 
$g_2^2$ is concerned instead of $g_2$.

The precise mathematical description of this inverse problem 
is given as follows, using the 
statistical moments of the realizations of 
$h(t,\o):=u(x_0,t,\o),\ x_0\in D$ to recover $g_1$ and $g_2^2$ 
simultaneously.

\subsection{Physical background} 
In microscopic level, the random motion of a single particle can be 
regarded as a diffusion process. Under the assumption that 
the mean squared displacement of jumps after a long time 
is proportional to time, i.e. 
$\overline{(\Delta x)^2}\propto t,\ t\to \infty$, 
the classical diffusion equation to describe the motion of particles 
can be derived. However, recently, some anomalous diffusion phenomena have been 
found, accompanying with considerable physical evidence, 
\cite{KlafterSilbey:1980, BarkaiMetzlerKlafter:2000,MagdziarzWeronBurneckiKlafter:2009}. 
In such \emph{anomalous diffusions}, the assumption 
$\overline{(\Delta x)^2}\propto t,\ t\to \infty$ may be 
violated, and it may possess the asymptotic behavior of $t^\alpha$, i.e. 
$\overline{(\Delta x)^2}\propto t^\alpha,\ \alpha\ne1.$ 
The different rate leads to a reformulation of the diffusion equation, 
introducing the time fractional derivative in it, and the corresponding 
equations are called fractional differential equations (FDEs). 
We list some of the applications of FDEs, 
to name a few, the diffusion process in a medium with fractal geometry 
\cite{nigmatullin1986realization},
non-Fickian transport in geological formations \cite{berkowitz2006modeling}, 
mathematical finance \cite{GorenfloMainardiScalasRaberto:2001}, 
theory of viscoelasticity \cite{bagley1983theoretical,Koeller:1984} 
and so on.

If uncertainty is added in the right-hand side, then this FDE system 
will become more complicated and interesting. This means that the random property in the source 
term will lead to a solution expressed as a stochastic process.
Considering this case is meaningful since it is 
common to meet a diffusion source defined as a stochastic process
 to describe the uncertain character imposed by nature. 
Hence, mathematically, it is worth to investigate the diffusion system 
with a random source.

\subsection{Previous literature} 
Considerable researchers have made efforts in the investigation of 
fractional differential equations and some valuable work are 
produced. For a comprehensive understanding of fractional calculus and 
fractional differential equations, we refer to
\cite{KilbasSrivastavaTrujillo:2006,SamkoKilbasMarichev:1993, 
baleanu2019handbook} and the references therein;
for numerical approaches, see 
\cite{ShenLiuAnh:2019, LvAzaiezXu:2018,HouHasanXu:2018,LvXu:2016,
ZengLiLiuTurner:2013,LiuLiuZeng:2019}.

In the field of inverse problems, for an extensive review,  
\cite{JinRundell:2015} is referred. For time-fractional inverse 
problems, see \cite{LiChengLi:2019,KaltenbacherRundell:2019,BabaeiBanihashemi:2019,
QasemiRostamyAbdollahi:2019,TuanLuuTatar:2019,TranNguyenPhamMachNguyen:2019,
YanWei:2019,RuanZhangXiong:2018,HuangLiYamamoto:2019,ZhangZhou:2017,
LiuZhang:2017,Zhang:2017}; for fractional Calderon problems, 
see \cite{HarrachLin:2019,RulandSalo:2018,CaoLinLiu:2019,LaiLin:2019,
GhoshRulandSaloUhlmann:2018}. 
Furthermore, if we extend the assumption 
$\overline{(\Delta x)^2}\propto t^\alpha$ to a more general case 
$\overline{(\Delta x)^2}\propto F(t)$, the multi-term fractional 
diffusion equations and even the distributed-order differential equations 
will be generated, see \cite{RundellZhang:2017, 
LiLuchkoYamamoto:2017,ChengYuanLiang:2019} for details. 

The inverse problems of determining the uncertain unknowns 
also have drawn more and more attention from researchers. 
We refer to \cite{FengLiWang:2019,BaoXu:2013,Li:2011,BaoChenLi:2016,
LiYuan:2017,LiChenLi:2018,BaoChowLiZhou:2014,NiuHelinZhang:2018} 
and the references therein.

\subsection{Main results and outline}
Throughout this paper, the following restrictions on $f$ and  
the unknowns $g_1,g_2$ are assumed to be valid.

\begin{assumption}\label{assumption}
Let the function $f$ and the unknowns $g_1,g_2$ satisfy the following conditions.
 \begin{itemize}
  \item $g_1, g_2\in C[0,T],$ 
  \item $f\in \mathcal D(\mathcal{A}^2)\subset H^4(D)$ and 
  $f(x_0) \neq 0$.
 \end{itemize}
\end{assumption}
The definition of the space $\mathcal D(\mathcal{A}^2)$ is stated in 
section \ref{section:preliminary}. 

Then we can state our main theorem, the stability theorem. 
It shows that $g_1,g_2^2$ can be bounded by the statistical moments of 
the realizations of single point data $u(x_0,t,\o)$. 
In this theorem, $g_2^2$ is used as we have mentioned in Section \ref{sec:1.1},
reflecting the linearity between $g_2^2$ and $\V[ I^{1-\alpha}_t h(t,\o)]$. 
Also, denote $|\cdot|_{H^{-1}(0,T)}$ as the seminorm of space 
$H^{-1}(0,T)$, $I_t^{1-\alpha}$ as the fractional integral 
operator, and $\E,\V$ as expectation and variance, 
respectively. Those knowledge can be found in section \ref{section:preliminary}.

\begin{theorem}[Stability]\label{stability}
 Under Assumption \ref{assumption}, the following stability results 
 for $g_1$ and $g_2^2$ hold,
 \begin{equation*}
 \begin{aligned}
 |g_1|_{H^{-1}(0,T)}&\le C \|\E[ I^{1-\alpha}_t h(\cdot,\o)]\|_{L^2(0,T)},\\ 
  |g_2^2|_{H^{-1}(0,T)}&\le C \|\V[ I^{1-\alpha}_t h(\cdot,\o)]\|_{L^2(0,T)}. 
  \end{aligned}
 \end{equation*}
 Here the constant $C>0$ is independent of $h$. 
\end{theorem}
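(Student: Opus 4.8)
The plan is to derive an explicit integral representation of the observation $h(t,\omega)=u(x_0,t,\omega)$ in terms of the unknowns $g_1$ and $g_2^2$, and then read off the stability estimates as mapping properties of the resulting Volterra-type operators. First I would expand the solution $u$ in the eigenbasis $\{\varphi_n\}$ of $\mathcal A$ with eigenvalues $\{\lambda_n\}$, using the stochastic Duhamel principle: each mode $u_n(t,\omega)=\langle u(\cdot,t,\omega),\varphi_n\rangle$ solves a scalar fractional SDE driven by $f_n[g_1(t)\,dt+g_2(t)\,d\mathbb W(t)]$ where $f_n=\langle f,\varphi_n\rangle$, so $u_n(t,\omega)=f_n\bigl(\int_0^t (t-s)^{\alpha-1}E_{\alpha,\alpha}(-\lambda_n(t-s)^\alpha)g_1(s)\,ds + \int_0^t (t-s)^{\alpha-1}E_{\alpha,\alpha}(-\lambda_n(t-s)^\alpha)g_2(s)\,d\mathbb W(s)\bigr)$. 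Summing against $\varphi_n(x_0)$ gives $h(t,\omega)$. The key simplification is to apply the Riemann–Liouville integral $I_t^{1-\alpha}$: since $I_t^{1-\alpha}$ composed with the kernel $t^{\alpha-1}E_{\alpha,\alpha}(-\lambda t^\alpha)$ collapses to the Mittag–Leffler kernel $E_{\alpha,1}(-\lambda t^\alpha)$ (this is the standard identity behind the benign behavior of fractional integrals on these kernels), $I_t^{1-\alpha}h$ has a clean representation with a kernel $K(t,s):=\sum_n f_n\varphi_n(x_0)E_{\alpha,1}(-\lambda_n(t-s)^\alpha)$, and the convergence of this sum is exactly what Assumption \ref{assumption} ($f\in\mathcal D(\mathcal A^2)$, $f(x_0)\ne 0$) is designed to guarantee.

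Next I would take moments. Because $\mathbb W$ is Brownian motion, the Itô isometry gives $\E[I_t^{1-\alpha}h(t,\omega)] = \int_0^t K(t,s)g_1(s)\,ds$ and $\V[I_t^{1-\alpha}h(t,\omega)] = \int_0^t K(t,s)^2\, g_2^2(s)\,ds$ — here the stochastic integral has mean zero, and its variance is the $L^2$-in-$s$ norm of the integrand, which is precisely where the linearity in $g_2^2$ (rather than $g_2$) comes from, as anticipated in the statement. So both identities have the same structural form: a Volterra integral operator with a smooth kernel applied to the unknown. Writing $\mathcal V_1 g_1 := \int_0^\cdot K(\cdot,s)g_1(s)\,ds$ and $\mathcal V_2 \psi := \int_0^\cdot K(\cdot,s)^2\psi(s)\,ds$, we have $\E[I_t^{1-\alpha}h]=\mathcal V_1 g_1$ and $\V[I_t^{1-\alpha}h]=\mathcal V_2 g_2^2$.

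It remains to invert these Volterra operators with a quantitative bound in the stated norms. The diagonal value $K(t,t)=\sum_n f_n\varphi_n(x_0)=f(x_0)\ne 0$ is nonzero by assumption, so $\mathcal V_1,\mathcal V_2$ are Volterra operators of "order one" (their kernels do not vanish on the diagonal); differentiating the defining identity once in $t$ shows that $g_1$ and $g_2^2$ are recovered by solving a second-kind Volterra equation whose forcing term is $\partial_t\E[I_t^{1-\alpha}h]$, resp. $\partial_t\V[I_t^{1-\alpha}h]$. Since $\partial_t$ maps $L^2(0,T)$ into $H^{-1}(0,T)$ and the second-kind Volterra resolvent is bounded on $H^{-1}(0,T)$ (its kernel $\partial_tK(t,s)/K(t,t)$ is bounded, using $f\in\mathcal D(\mathcal A^2)$ to control the $t$-derivative of the Mittag–Leffler series), one obtains $|g_1|_{H^{-1}(0,T)}\le C\|\partial_t\E[I_t^{1-\alpha}h]\|_{H^{-1}(0,T)}\le C\|\E[I_t^{1-\alpha}h]\|_{L^2(0,T)}$, and likewise for $g_2^2$, with $C$ depending only on $f$, $\mathcal A$, $\alpha$, $T$, $x_0$ — not on $h$. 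The main obstacle I expect is the convergence and regularity analysis of the kernel series $K(t,s)=\sum_n f_n\varphi_n(x_0)E_{\alpha,1}(-\lambda_n(t-s)^\alpha)$ and its $t$-derivative: the pointwise value $\varphi_n(x_0)$ grows with $n$, the Mittag–Leffler factor only decays like $\lambda_n^{-1}$, and the $t$-derivative worsens the singularity near $s=t$, so one must exploit the full two extra orders of smoothness in $f\in\mathcal D(\mathcal A^2)\subset H^4(D)$ together with Sobolev embedding (valid since $d\le 3$) to make all the sums converge and the Volterra kernels bounded; pinning down that this is exactly enough regularity — and no more — is the technical heart of the argument.
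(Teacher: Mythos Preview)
Your overall strategy matches the paper's: the kernel $K(t,s)=\sum_n f_n\varphi_n(x_0)E_{\alpha,1}(-\lambda_n(t-s)^\alpha)$ is nothing other than $v(x_0,t-s)$, where $v$ solves the homogeneous fractional equation with initial data $f$, and your moment identities $\E[I_t^{1-\alpha}h]=\int_0^t K(t,s)g_1(s)\,ds$, $\V[I_t^{1-\alpha}h]=\int_0^t K(t,s)^2 g_2^2(s)\,ds$ are exactly what the paper derives. The reduction to a second-kind Volterra equation and the appeal to resolvent theory are the same as well.

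There is, however, one concrete error. You assert that the kernel $\partial_t K(t,s)/K(t,t)=f(x_0)^{-1}v_t(x_0,t-s)$ is \emph{bounded}. It is not: from the Mittag--Leffler representation one has $|v_t(x_0,t)|\le C\, t^{\alpha-1}$, which blows up at $t=0$. What is true --- and this is precisely where the restriction $\alpha\in(1/2,1)$ enters --- is that $v_t(x_0,\cdot)\in L^2(0,T)$; the paper proves this carefully (the lemma you correctly anticipate as the technical heart) using $f\in\mathcal D(\mathcal A^2)$ and Sobolev embedding in $d\le 3$. With only an $L^2$ kernel, your ``resolvent bounded on $H^{-1}$'' step needs more justification than you give.

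The paper sidesteps this by a small but clarifying change of viewpoint: rather than differentiating the first-kind equation and solving for $g_1$ in $H^{-1}$, it integrates by parts, setting $G_1(t)=\int_0^t g_1$ and $G_2(t)=\int_0^t g_2^2$, to obtain second-kind Volterra equations for $G_1,G_2$ directly in $L^2(0,T)$, with forcing $f(x_0)^{-1}\E[I_t^{1-\alpha}h]$, $f(x_0)^{-2}\V[I_t^{1-\alpha}h]$ and convolution kernels $f(x_0)^{-1}v_t(x_0,\cdot)$, $2f(x_0)^{-2}v(x_0,\cdot)v_t(x_0,\cdot)$. Since these kernels lie in $L^2$ (using the $L^2$ bound on $v_t(x_0,\cdot)$ together with boundedness of $v(x_0,\cdot)$ from the maximum principle), standard $L^2$ resolvent theory yields $\|G_j\|_{L^2}\le C\|\text{moment}\|_{L^2}$. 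The paper's seminorm $|g_1|_{H^{-1}(0,T)}$ is by definition $\|G_1\|_{L^2(0,T)}$, so the theorem follows immediately. Your route and the paper's are equivalent up to one antiderivative, but the paper's packaging never has to make sense of Volterra resolvents acting on negative-order spaces.
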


With Theorem \ref{stability}, the following proposition about uniqueness 
can be derived immediately.
\begin{proposition}[Uniqueness]\label{uniqueness}
 Under Assumption \ref{assumption}, $g_1$ and $g_2^2$ can be uniquely determined by the moments
 $$\E[ I^{1-\alpha}_t h(t,\o)],
 \ \V[ I^{1-\alpha}_t h(t,\o)],\quad t\in[0,T].$$

 More precisely, suppose $g_1,g_2,\tilde{g}_1, \tilde{g}_2$ satisfy 
 Assumption \ref{assumption}, and denote
 the corresponding realizations as $h,\tilde{h}$, respectively. Then
 \begin{equation}\label{equality_1}
 \begin{aligned}
  \E[ I^{1-\alpha}_t h(t,\o)]&=\E[ I^{1-\alpha}_t \tilde{h}(t,\o)],\\
 \ \V[ I^{1-\alpha}_t h(t,\o)]&=\V[ I^{1-\alpha}_t \tilde{h}(t,\o)],
 \quad t\in[0,T],
 \end{aligned}
 \end{equation}
 leads to $g_1=\tilde{g}_1$, $g_2^2=\tilde{g}_2^2$ on $[0,T]$.
\end{proposition}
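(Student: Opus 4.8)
The plan is to deduce Proposition \ref{uniqueness} from Theorem \ref{stability} by exploiting the linear structure of the forward problem. First I would invoke the integral representation of $u(x_0,t,\o)$ established earlier in the paper: after acting with $I^{1-\alpha}_t$, the process $I^{1-\alpha}_t h(t,\o)$ decomposes into a deterministic part driven by $g_1$ through a linear Volterra operator plus an It\^o integral driven by $g_2$. Taking expectation annihilates the It\^o integral, so $t\mapsto\E[I^{1-\alpha}_t h(\cdot,\o)]=\L_1 g_1$ for a linear Volterra operator $\L_1$ depending only on $f,x_0,\A,\alpha$; taking variance and applying the It\^o isometry, $t\mapsto\V[I^{1-\alpha}_t h(\cdot,\o)]=\L_2(g_2^2)$ for a linear Volterra operator $\L_2$ of the same type. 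In particular the first moment sees only $g_1$ and the second moment sees only $g_2^2$.

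Second, I would subtract the two data sets. With $h,\tilde h$ the realizations corresponding to $(g_1,g_2)$ and $(\tilde g_1,\tilde g_2)$, linearity of $\L_1,\L_2$ together with \eqref{equality_1} gives
\[
\L_1(g_1-\tilde g_1)=\E[I^{1-\alpha}_t h(\cdot,\o)]-\E[I^{1-\alpha}_t \tilde h(\cdot,\o)]=0,\qquad
\L_2(g_2^2-\tilde g_2^2)=\V[I^{1-\alpha}_t h(\cdot,\o)]-\V[I^{1-\alpha}_t \tilde h(\cdot,\o)]=0
\]
on $[0,T]$, where $g_1-\tilde g_1\in C[0,T]$ and $g_2^2-\tilde g_2^2\in C[0,T]$ by Assumption \ref{assumption}. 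The stability estimates of Theorem \ref{stability} are, read correctly, the quantitative injectivity statements $|\phi|_{H^{-1}(0,T)}\le C\|\L_1\phi\|_{L^2(0,T)}$ and $|\psi|_{H^{-1}(0,T)}\le C\|\L_2\psi\|_{L^2(0,T)}$. Applying the first to $\phi=g_1-\tilde g_1$ and the second to $\psi=g_2^2-\tilde g_2^2$ yields
\[
|g_1-\tilde g_1|_{H^{-1}(0,T)}=0,\qquad |g_2^2-\tilde g_2^2|_{H^{-1}(0,T)}=0 .
\]

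Third, I would turn vanishing seminorms into pointwise identities: if $v\in C[0,T]$ and $|v|_{H^{-1}(0,T)}=0$, then $\langle v,\varphi\rangle=0$ for all $\varphi\in C_c^\infty(0,T)\subset H^1_0(0,T)$, hence $v=0$ a.e.\ by density of $C_c^\infty(0,T)$ in $L^2(0,T)$, and so $v\equiv0$ on $[0,T]$ by continuity. Applying this to $v=g_1-\tilde g_1$ and $v=g_2^2-\tilde g_2^2$ gives $g_1=\tilde g_1$ and $g_2^2=\tilde g_2^2$ on $[0,T]$, which is the assertion.

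The step I expect to need the most care is the second application of the stability estimate. Since $g_2^2-\tilde g_2^2$ need not be the square of a continuous function (it may change sign), one cannot quote Theorem \ref{stability} as a black box with $g_2^2$ literally replaced by this difference; instead one must observe that the proof of Theorem \ref{stability} uses only the linearity of $\L_2$ and never the nonnegativity of its argument, so it applies verbatim to any element of $C[0,T]$, in particular to $g_2^2-\tilde g_2^2$. A secondary point is to make sure that the decoupling asserted in the first paragraph — expectation depends only on $g_1$, variance only on $g_2^2$ — is genuinely furnished by the integral representation and the It\^o isometry rather than tacitly assumed.
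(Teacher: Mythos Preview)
Your proposal is correct and follows essentially the same route as the paper: the paper derives the linear Volterra relations \eqref{moment_unknown} (your $\L_1,\L_2$), subtracts, invokes Lemma~\ref{G1G2} (which underlies Theorem~\ref{stability}) to get $|g_1-\tilde g_1|_{H^{-1}}=|g_2^2-\tilde g_2^2|_{H^{-1}}=0$, and then appeals to continuity. Your observation that the second stability bound must be applied to $g_2^2-\tilde g_2^2\in C[0,T]$ via the linearity of the underlying Volterra argument (not as a literal square) is exactly right and is handled implicitly in the paper; one small adjustment is that under the paper's seminorm convention $|v|_{H^{-1}(0,T)}=\|\int_0^{\cdot} v\|_{L^2(0,T)}$, so step three is cleanest phrased as ``$\int_0^t(g_1-\tilde g_1)\,d\tau=0$ for a.e.\ $t$, hence for all $t$ by continuity, hence $g_1=\tilde g_1$'' rather than via duality with $C_c^\infty$.
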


\subsection{Outline}
This paper is structured as follows. Section \ref{section:preliminary} 
includes some preliminary knowledge, such as the 
probability space $(\Omega,\mathcal{F},\mathbb{P})$, the 
Ito isometry formula and the maximum principles in fractional diffusion equations. 
Also, we define the stochastic weak 
solution $u$ as a stochastic process, $u:(0,T]\times\Omega \to L^2(D)$. 
In section \ref{section:main}, the proofs for Theorem \ref{uniqueness} and  
Proposition \ref{stability} are built. 
After the theoretical analysis, we consider the numerical reconstruction 
in section \ref{sec:numerical}. A mollification method is established  
and some numerical results are displayed.

\section{Preliminary setting}\label{section:preliminary}

\subsection{Eigensystem of $\A$ and the space $\mathcal{D}( \A^{\gamma} )$}
Since $\A$ is a positive elliptic operator defined on $H^2(D)\cap H^1_0(D)$, 
then its eigensystem $\{\lambda_n,\phi_n(x):n\in\mathbb{N}^+\}$ satisfies 
the following properties, 
\begin{itemize}
 \item $0<\lambda_1\le \lambda_2\le\cdots\le \lambda_n\le \cdots$ and 
 $\lambda_n\to \infty$ as $n\to \infty$,
 \item  $\{\phi_n(x):n\in\mathbb{N}^+\}\subset H^2(D)\cap H^1_0(D)$ 
 forms an orthonormal basis in $L^2(D)$.
\end{itemize}

Then given $\gamma>0,$ we define the space $\mathcal{D}(\A^\gamma)$ as 
\begin{equation*}
 \mathcal{D}(\A^\gamma):=\Big\{\psi\in L^2(D): \sum_{n=1}^\infty 
 \lambda_n^{2\gamma} \l\psi(\cdot),\phi_n(\cdot)\rd^2<\infty\Big\},
\end{equation*}
where $\l\cdot,\cdot\rd$ denotes the inner product in $L^2(D)$. 
It holds that $\mathcal{D}(\A^\gamma)\subset H^{2\gamma}(D)$ 
since $D$ has smooth boundary, \cite{SakamotoYamamoto:2011}. 

Moreover, we give the definition of Sobolev space with negative integer 
order $H^{-m}(0,T)$, which is used in Theorem \ref{stability}. 
$H^{-m}(0,T)$ is defined as the dual space of $H^m(0,T)$, namely 
$H^{-m}(0,T)=(H^m(0,T))'$, and from \cite[Theorem 12.1]{LionsMagenes:1972V1}, we have 
\begin{equation*}
 H^{-m}(0,T)=\Big\{\psi: \psi=\sum_{|j|\le m} D^j \psi_j,\ \exists \text{ some } \psi_j\in L^2(0,T)\Big\}.
\end{equation*}
Accordingly, define the $H^{-m}$ semi-norm as 
$$ | \psi |_{ H^{-m}(0,T) } = \| \psi_m \|_{ L^2(0,T) } .$$

\subsection{Probability setting and Ito isometry formula}
We give the definitions of probability space and the statistical 
moments $\E,\V$ below. 
\begin{definition}
 We call $(\Omega,\mathcal{F},\mathbb{P})$ a probability space 
 if $\Omega$ denotes the nonempty sample space, 
 $\mathcal{F}$ is the $\sigma-$algebra of $\Omega$ and 
 $\mathbb{P}:\mathcal{F}\to [0,1]$ is the probability measure. 
 
 For a random variable $X$ defined on $(\Omega,\mathcal{F},\mathbb{P})$, 
 the expectation $\E$ and variance $\V$ are defined as follows, 
 \begin{equation*}
  \E[X]=\int_\Omega X(\o)\ d\mathbb{P}(\o),\quad 
  \V[X]=\E[(X-E[X])^2].
 \end{equation*}
\end{definition}

For the time dependent Brownian motion, which is described as 
Wiener process $\W(t)$, we list some of its properties. 
\begin{remark}
 The Wiener process $\W(t)$ used in this article possesses 
 the following properties, 
 \begin{itemize}
  \item $\W(0)=0,$
  \item $\W(t)$ is almost surely continuous, 
  \item $\W(t)$ has independent increments and satisfies 
  $$\W(t)-\W(s)\sim \mathcal{N}(0,t-s),\quad 0\le s\le t,$$
  where $\mathcal{N}$ is the normal distribution. 
 \end{itemize}
\end{remark}

Now we can state the Ito isometry formula, which plays a crucial role  
in our analysis. In the following lemma, the random measure 
derived from $\W$ is denoted by $d\W(t)$.
\begin{lemma}{(\cite{Bernt2003stochastic}).}\label{Ito isometry formula} 
Let $(\Omega,\mathcal{F},\mathbb{P})$ be a probability space and let 
$\psi: [0,\infty)\times \Omega\rightarrow\mathbb{R}$
satisfy the following properties, 
\begin{itemize}
 \item[(1)] $(t,\omega)\rightarrow \psi(t,\omega)$ is 
 $\mathcal{B}\times\mathcal{F}$-measurable, 
 where $\mathcal{B}$ denotes the Borel $\sigma$-algebra on 
 $[0,\infty),$
 \item[(2)] $\psi(t,\omega)$ is $\mathcal{F}_t$-adapted,
 \item[(3)] $\E [\int_0^S\psi^2(\tau,\omega)\mathrm{d}\tau] <\infty$ 
 for $S>0$.
\end{itemize}
Then the Ito integral $\int_0^S \psi(\tau,\o)\ d\W(\tau)$ is well defined 
and it follows that
\begin{equation*}
 \E\Big[\Big(\int_0^S\psi(\tau,\omega)~d\W(\tau)\Big)^2\Big]
=\E\Big[\int_0^S\psi^2(\tau,\omega)~d\tau\Big].
\end{equation*}
\end{lemma}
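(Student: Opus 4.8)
The plan is to follow the classical three-stage construction of the Itô integral and check that the isometry, which is trivially true on a dense subclass, extends to the full class of admissible integrands. First I would fix $S>0$ and reduce to integrands on $[0,S]$; write $V=V(0,S)$ for the class of $\psi$ satisfying conditions (1)--(3). The first step is to establish the isometry for \emph{elementary} (simple) processes, i.e. functions of the form $\psi(t,\omega)=\sum_{j} e_j(\omega)\,\mathbf{1}_{[t_j,t_{j+1})}(t)$ with $0=t_0<t_1<\cdots<t_N=S$ and each $e_j$ being $\mathcal F_{t_j}$-measurable and square-integrable. For such $\psi$ the Itô integral is defined by $\int_0^S\psi\,d\W=\sum_j e_j\bigl(\W(t_{j+1})-\W(t_j)\bigr)$, and expanding the square gives diagonal terms $\E[e_j^2(\W(t_{j+1})-\W(t_j))^2]$ and cross terms $\E[e_ie_j(\W(t_{i+1})-\W(t_i))(\W(t_{j+1})-\W(t_j))]$ with $i<j$. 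The key point is the $\mathcal F_{t_j}$-adaptedness: by the tower property, conditioning on $\mathcal F_{t_j}$ makes the increment $\W(t_{j+1})-\W(t_j)$ independent of everything measurable at time $t_j$, so the cross terms vanish and the diagonal terms equal $\E[e_j^2](t_{j+1}-t_j)$. Summing yields $\E[(\int_0^S\psi\,d\W)^2]=\sum_j\E[e_j^2](t_{j+1}-t_j)=\E[\int_0^S\psi^2\,dt]$, which is the isometry for elementary $\psi$.

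The second step is an approximation argument: one shows that any $\psi\in V$ can be approximated in the norm $\|\psi\|_{L^2([0,S]\times\Omega)}^2=\E[\int_0^S\psi^2\,dt]$ by a sequence of elementary processes $\psi_n$. This is done in the standard layers --- first approximate a bounded continuous-in-$t$ integrand by step functions built from its own values at a mesh, then approximate a general bounded $\psi$ by bounded continuous ones via mollification in $t$, then truncate an arbitrary $\psi\in V$ by $\psi\mathbf{1}_{\{|\psi|\le n\}}$ --- at each layer checking that adaptedness and measurability are preserved and invoking dominated convergence for condition (3). Granting this, $(\psi_n)$ is Cauchy in $L^2([0,S]\times\Omega)$, so by the isometry already proved on elementary processes the sequence of integrals $(\int_0^S\psi_n\,d\W)$ is Cauchy in $L^2(\Omega)$; its limit is \emph{defined} to be $\int_0^S\psi\,d\W$ (independent of the approximating sequence, again by the isometry). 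Finally, passing to the limit in the identity $\E[(\int_0^S\psi_n\,d\W)^2]=\E[\int_0^S\psi_n^2\,dt]$ --- the left side converges because $L^2(\Omega)$-convergence implies convergence of second moments, the right side because $\psi_n\to\psi$ in $L^2([0,S]\times\Omega)$ --- delivers the isometry for all $\psi\in V$.

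The main obstacle, and the only genuinely technical part, is the density step: producing elementary approximants while respecting the filtration. Naive mollification in $t$ is not adapted (it peeks into the future), so one must mollify using a one-sided kernel supported in the past, or equivalently approximate via $\psi_n(t,\omega)=n\int_{(t-1/n)\vee 0}^{t}\psi(s,\omega)\,ds$ and then discretize; verifying $\mathcal F_t$-measurability of these and $L^2$-convergence back to $\psi$ (which for the continuous-in-$t$ reduction is clear, and in general follows from continuity of translation in $L^2$) is where the care lies. Everything else --- the vanishing of cross terms, the Cauchy/limit bookkeeping, passing second moments through an $L^2$ limit --- is routine. Since this lemma is quoted verbatim from \cite{Bernt2003stochastic}, in the paper one may simply cite that reference; the sketch above is the content behind it.
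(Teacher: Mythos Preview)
Your sketch is correct and follows the standard construction in \cite{Bernt2003stochastic}, but note that the paper does not prove this lemma at all: it is stated with a citation and used as a black box. Since you already observe this in your final sentence, there is nothing to compare---your proposal simply supplies the argument that the paper outsources to the reference.
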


\subsection{Stochastic weak solution}
Since we can not make sure $\dot{\W}(t)$ is continuous or differentiable 
for a given $\o\in\Omega$, we need to consider the weak solution 
in the sense of Ito integral. 

Firstly, we define the fractional integral operator as follows.
\begin{definition}
 The fractional integral operator $\I$ is given as
\begin{equation*}
 \I \psi(t)=\Gamma(\alpha)^{-1}\int_0^t(t-\tau)^{\alpha-1}\psi(\tau)\ d\tau,
 \quad \psi\in L^1_{loc}(0,\infty).
\end{equation*}
\end{definition}

The following remark explains that why we need to set the restriction 
$\alpha\in(1/2,1)$ on the fractional order $\alpha$. 
\begin{remark}
With the above definition, we can define the Ito integral 
$\I g(t,\omega)$ as  
 \begin{equation*}
  \I g(t,\omega)=\I g_1(t)+\Gamma(\alpha)^{-1}\int_0^t(t-\tau)^{\alpha-1}g_2(\tau)\ d\W(\tau).
 \end{equation*}
From the conditions $\alpha\in(1/2,1)$ and $g_2\in C[0,T]$, 
we have $(t-\tau)^{\alpha-1}g_2(\tau)$ is square-integrable 
on $[0,T].$ Then the well-definedness of the Ito integral 
$\int_0^t(t-\tau)^{\alpha-1}g_2(\tau)\ d\W(\tau)$ is ensured by 
Lemma \ref{Ito isometry formula}. 
\end{remark}

In addition, the direct calculation gives that 
\begin{equation*}
 \I \D \psi (t)=\psi(t)-\psi(0),
\end{equation*}
which can help us build the definition of the weak solution for 
equation \eqref{SDE}. 
\begin{definition}[Stochastic weak solution]\label{weak solution}
We say the stochastic process 
$u(\cdot,t,\omega):(0,T]\times\Omega\to L^2(D)$ is 
a stochastic weak solution of equation \eqref{SDE} if for each 
$\psi\in H^2(D)\cap H_0^1(D)$ and $\omega\in \Omega$, it holds that  
 \begin{equation*}
  \l u(\cdot,t,\o), \psi(\cdot)\rd+\l \I\A u(\cdot,t,\o), \psi(\cdot)\rd
  = \I g(t,\o)\ \l f(\cdot), \psi(\cdot)\rd,\quad t\in (0,T].
 \end{equation*}
\end{definition}

\subsection{Auxiliary lemmas}

The following lemmas will be used in the proof of stability.

\begin{lemma}{(\cite[Theorem 1.6]{Podlubny1999fractional})}\label{lem:mittag_bound}
\label{lem-ml-asymp}
The Mittag Leffler function $E_{\alpha,\beta}(z)$ is defined as 
$$E_{\alpha,\beta}(z)=\sum_{k=0}^\infty \frac{z^k}{\Gamma(\alpha k+\beta)}, 
\quad z\in\mathbb{C}.$$ 
Let $0<\alpha<2$ and $\beta\in\mathbb R$. 
Then for $\mu$ satisfying $\pi\alpha/2<\mu<\min\{\pi,\pi\alpha\}$, 
there exists a constant $C=C(\alpha,\beta,\mu)>0$ such that
$$
|E_{\alpha,\beta}(z)| \le \frac{C}{1+|z|},\quad \mu\le |\arg(z)| \le \pi.
$$
\end{lemma}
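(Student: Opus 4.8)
The bound is the classical decay estimate for the Mittag--Leffler function in the sector away from the positive real axis; it is proved in \cite{Podlubny1999fractional} via a Hankel-type integral representation, and the plan is to carry out that argument. First I would split off the large-modulus regime: on the set $\{z:\mu\le|\arg z|\le\pi,\ |z|\le R\}$, after adjoining the origin (where $E_{\alpha,\beta}(0)=1/\Gamma(\beta)$) we have a compact set on which the entire function $E_{\alpha,\beta}$ is continuous, hence bounded by a constant $C(\alpha,\beta,R)$. So it suffices to establish $|E_{\alpha,\beta}(z)|\le C|z|^{-1}$ for $|z|\ge R$ and $\mu\le|\arg z|\le\pi$, and then to combine the two bounds into the claimed $C/(1+|z|)$ estimate.

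For the integral representation I would insert Hankel's formula for the reciprocal Gamma function, $\Gamma(s)^{-1}=(2\pi i)^{-1}\int_{\mathrm{Ha}}e^{\zeta}\zeta^{-s}\,d\zeta$ (with $\mathrm{Ha}$ a loop from $-\infty$ below the cut, around the origin at radius $\varepsilon$, back to $-\infty$ above the cut), taken with $s=\alpha k+\beta$, into the defining series, interchange sum and integral, and sum the geometric series in $z\zeta^{-\alpha}$ (valid for $|z|<\varepsilon^{\alpha}$, and the radius is ours to choose). This gives
\[
E_{\alpha,\beta}(z)=\frac{1}{2\pi i}\int_{\gamma(\varepsilon,\theta)}\frac{e^{\zeta}\,\zeta^{\alpha-\beta}}{\zeta^{\alpha}-z}\,d\zeta ,
\]
where $\gamma(\varepsilon,\theta)$ has arms along $\arg\zeta=\pm\theta$ joined by the arc $|\zeta|=\varepsilon$, and $\theta$ is fixed with $\pi/2<\theta$ and $\alpha\theta<\mu$ --- both possible exactly because $\mu>\pi\alpha/2$. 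By analytic continuation the identity extends to all $z$ with $|\arg z|>\alpha\theta$, in particular to $\mu\le|\arg z|\le\pi$, since for such $z$ the pole $\zeta^{\alpha}=z$ (at $\arg\zeta=(\arg z)/\alpha$), when present, lies outside $\gamma(\varepsilon,\theta)$ and is not swept during the deformation; pushing $|\arg z|$ below $\alpha\theta$ would capture that pole and add the residue term $\alpha^{-1}z^{(1-\beta)/\alpha}e^{z^{1/\alpha}}$, which is precisely the exponentially growing contribution absent from the sector in the statement.

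To extract the decay I would use the exact identity $(\zeta^{\alpha}-z)^{-1}=-z^{-1}+\zeta^{\alpha}z^{-1}(\zeta^{\alpha}-z)^{-1}$ and split the integral. The first piece is $-\frac{1}{2\pi i z}\int_{\gamma}e^{\zeta}\zeta^{\alpha-\beta}\,d\zeta=-z^{-1}/\Gamma(\beta-\alpha)$ by Hankel's formula run backwards; the second is a remainder $\mathcal R(z)=\frac{1}{2\pi i z}\int_{\gamma}e^{\zeta}\zeta^{2\alpha-\beta}(\zeta^{\alpha}-z)^{-1}\,d\zeta$. Along the arms $\arg\zeta=\pm\theta$ one has $\operatorname{Re}\zeta=|\zeta|\cos\theta$ with $\cos\theta<0$, so $|e^{\zeta}|=e^{-c|\zeta|}$ with $c>0$ and $\int_{\gamma}|e^{\zeta}||\zeta|^{2\alpha-\beta}|d\zeta|<\infty$; and the strict angular gap $\mu-\alpha\theta>0$ makes $\mathrm{dist}(z,\{\zeta^{\alpha}:\zeta\in\gamma\})$ bounded below by a fixed multiple of $|z|$, uniformly for $\mu\le|\arg z|\le\pi$ and $|z|\ge R$. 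Hence $|\mathcal R(z)|\le C|z|^{-2}$, so $|E_{\alpha,\beta}(z)|\le |z|^{-1}/|\Gamma(\beta-\alpha)|+C|z|^{-2}\le C|z|^{-1}$ on this range, and combining with the compact-set bound yields the lemma with $C=C(\alpha,\beta,\mu)$.

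The main obstacle is the construction and rigorous justification of the contour representation: fixing $\varepsilon$ and the half-angle $\theta$ consistently with the branch of $\zeta\mapsto\zeta^{\alpha}$, justifying the interchange of summation and integration and the termwise split, and --- the genuinely delicate bookkeeping --- tracking whether the pole $\zeta^{\alpha}=z$ is enclosed as the contour is deformed, since that dichotomy is exactly what separates the decay sector $\mu\le|\arg z|\le\pi$ from the sector where $E_{\alpha,\beta}$ grows like $e^{z^{1/\alpha}}$. Once the representation is secured, the remaining steps --- integrability along the arms from the exponential decay of $e^{\zeta}$, and the lower bound on $\mathrm{dist}(z,\{\zeta^{\alpha}\})$ coming from $\alpha\theta<\mu$ --- are routine.
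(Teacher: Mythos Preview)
The paper does not prove this lemma at all: it is stated with a citation to \cite[Theorem 1.6]{Podlubny1999fractional} and used as a black box, with no proof environment following the statement. Your sketch correctly reproduces the standard Hankel-contour argument from that reference, so there is nothing to compare against and your approach is appropriate.
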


\begin{lemma}{(Maximum principle, \cite[Theorem 2]{Luchko:2009}).}\label{lem:max}
Fix $T\in (0,\infty),$ let $\psi$ satisfy the following 
fractional diffusion equation 
 \begin{equation*}\label{FDE}
    \D \psi+\A \psi=F(x,t),\quad (x,t)\in D\times(0,T],
 \end{equation*}
 and define $\Lambda_T=\partial D\times [0,T]\cup \overline{D}\times \{0\}$.
If $F\le 0$, then 
\begin{equation*}
 \psi(x,t)\le \max\{0,\max\{\psi(x,t):(x,t)\in \Lambda_T\}\},
 \quad (x,t)\in D\times (0,T].
\end{equation*}
\end{lemma}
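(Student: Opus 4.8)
The plan is to reduce this parabolic-type maximum principle to two pointwise facts: a fractional analogue of the vanishing–derivative condition at a maximum, and the classical sign of an elliptic operator at an interior spatial maximum. Combining them with the hypothesis $F\le 0$ then produces a contradiction with the equation unless the claimed bound already holds.

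First I would establish the key auxiliary estimate, an \emph{extremum principle} for the Caputo derivative: if $v\in C^1[0,T]$ attains its maximum over $[0,t_0]$ at the endpoint $t_0\in(0,T]$, then
\[
\D v(t_0)\ \ge\ \frac{t_0^{-\alpha}}{\Gamma(1-\alpha)}\bigl(v(t_0)-v(0)\bigr)\ \ge\ 0.
\]
To prove this I set $g(\tau)=v(t_0)-v(\tau)\ge 0$, so that $g(t_0)=0$, $g(0)\ge 0$, and $\D v(t_0)=-\D g(t_0)$. Integrating by parts in the defining integral of $\D g(t_0)$ with $u=(t_0-\tau)^{-\alpha}$ and $dv=g'(\tau)\,d\tau$, the boundary contribution at $\tau=t_0$ vanishes because $g(t_0)=0$ offsets the singularity and $1-\alpha>0$, leaving
\[
\D g(t_0)=\frac{1}{\Gamma(1-\alpha)}\Bigl[-t_0^{-\alpha}g(0)-\alpha\int_0^{t_0}(t_0-\tau)^{-\alpha-1}g(\tau)\,d\tau\Bigr]\le 0 ,
\]
since every term on the right is nonpositive. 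Negating recovers the asserted inequality.

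Next I would run the contradiction argument for the equation. Write $M=\max\{0,\max_{\Lambda_T}\psi\}$ and suppose the continuous function $\psi$ attains its maximum $M_1$ over the compact set $\overline D\times[0,T]$ at a point $(x_0,t_0)$ with $M_1>M$. Since $M_1>\max_{\Lambda_T}\psi$, this point cannot lie on $\Lambda_T$, hence $x_0\in D$ and $t_0\in(0,T]$. At the interior spatial maximum the gradient of $\psi(\cdot,t_0)$ vanishes and its Hessian is negative semidefinite, whence $\A\psi(x_0,t_0)\ge 0$ by the positivity and ellipticity of $\A$ together with $\psi(x_0,t_0)=M_1>0$. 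Applying the extremum principle to $v(t)=\psi(x_0,t)$, which is maximized over $[0,t_0]$ at $t_0$, and using $\psi(x_0,0)\le M<M_1$, I obtain the \emph{strict} inequality $\D\psi(x_0,t_0)\ge \frac{t_0^{-\alpha}}{\Gamma(1-\alpha)}\bigl(M_1-\psi(x_0,0)\bigr)>0$. Therefore $\D\psi(x_0,t_0)+\A\psi(x_0,t_0)>0$, contradicting the equation together with $F(x_0,t_0)\le 0$. Thus $M_1\le M$, which is precisely the claimed estimate.

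The main obstacle is the fractional extremum principle itself, and specifically the justification of the integration by parts: the kernel $(t_0-\tau)^{-\alpha}$ is singular at $\tau=t_0$, so one must verify that the boundary term $(t_0-\tau)^{-\alpha}g(\tau)$ vanishes there, which rests on the cancellation $g(t_0)=0$ against the integrable exponent $1-\alpha>0$, and on sufficient regularity of $\psi(x_0,\cdot)$ (say $C^1$ in $t$, or $W^1_1\cap C$) for the manipulation to be valid. Once that pointwise inequality is secured, the strict sign at $(x_0,t_0)$ comes for free from $\psi(x_0,0)<M_1$, so no additional perturbation of $\psi$ is needed to close the contradiction.
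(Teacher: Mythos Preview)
The paper gives no proof of this lemma; it is cited verbatim from Luchko (2009, Theorem~2) as an auxiliary tool. Your proposal is in fact a faithful reconstruction of Luchko's own argument there: first the pointwise extremum principle for the Caputo derivative (Luchko's Theorem~1), obtained by integrating by parts against the auxiliary function $g(\tau)=v(t_0)-v(\tau)$, and then the standard contradiction at an interior space--time maximum using the sign of the elliptic part. So there is nothing inside the present paper to compare against, but your route is the canonical one and is correct under the regularity you flag (in particular $\psi(x_0,\cdot)\in C^1$ in $t$, so that $g(\tau)=O(t_0-\tau)$ near $t_0$ and the boundary term $(t_0-\tau)^{-\alpha}g(\tau)$ vanishes).

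One point deserves tightening. The assertion ``$\mathcal{A}\psi(x_0,t_0)\ge 0$ by the positivity and ellipticity of $\mathcal{A}$ together with $\psi(x_0,t_0)>0$'' does not follow from abstract spectral positivity of $\mathcal{A}$; it needs the concrete pointwise structure Luchko assumes, namely $\mathcal{A}u=-\mathrm{div}(p(x)\nabla u)+q(x)u$ with $p>0$ and $q\ge 0$, so that at an interior spatial maximum $\nabla\psi=0$, the Hessian is negative semidefinite, and hence $-\mathrm{div}(p\nabla\psi)\ge 0$ while $q\psi\ge 0$. State that hypothesis explicitly rather than invoking ``positivity'' alone.
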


The next lemma contains a $L^2$ regularity. 
\begin{lemma}\label{lem:vt}
 Let $v(x,t)$ be the solution of the following fractional diffusion equation, 
 \begin{equation}\label{FDE_v}
  \begin{cases}
   \begin{aligned}
    \D v+\A v &=0,&& (x,t)\in D\times(0,T],\\
    v(x,t)&=0,&& (x,t)\in \partial D\times(0,T],\\
    v(x,0)&=f(x),&& x\in D,
   \end{aligned}
  \end{cases}
 \end{equation}
 then there exists $C>0$ depending on $f(x),\alpha,T$ such that  
 $\|v_t(x_0,\cdot)\|_{L^2(0,T)}\le C$.  
\end{lemma}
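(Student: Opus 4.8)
\textbf{Proof plan for Lemma \ref{lem:vt}.}
The plan is to use the eigenfunction expansion of the solution $v$ and the asymptotic bound on the Mittag--Leffler function from Lemma \ref{lem:mittag_bound}. Expanding $f = \sum_{n=1}^\infty f_n \phi_n$ with $f_n = \l f,\phi_n\rd$, the standard representation of the solution of \eqref{FDE_v} is
\begin{equation*}
 v(x,t) = \sum_{n=1}^\infty f_n E_{\alpha,1}(-\lambda_n t^\alpha)\phi_n(x),
\end{equation*}
so that, differentiating term by term and using the identity $\frac{d}{dt}E_{\alpha,1}(-\lambda_n t^\alpha) = -\lambda_n t^{\alpha-1}E_{\alpha,\alpha}(-\lambda_n t^\alpha)$, we get
\begin{equation*}
 v_t(x_0,t) = -\sum_{n=1}^\infty f_n \lambda_n t^{\alpha-1} E_{\alpha,\alpha}(-\lambda_n t^\alpha)\phi_n(x_0).
\end{equation*}
First I would justify this differentiation and the convergence in $L^2(0,T)$, then estimate the $L^2(0,T)$ norm of $v_t(x_0,\cdot)$.

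The key step is the bound $|E_{\alpha,\alpha}(-\lambda_n t^\alpha)| \le C/(1+\lambda_n t^\alpha)$, which holds by Lemma \ref{lem:mittag_bound} since $-\lambda_n t^\alpha$ is a negative real number (so $\arg = \pi$). Hence $\lambda_n t^{\alpha-1}|E_{\alpha,\alpha}(-\lambda_n t^\alpha)| \le C\lambda_n t^{\alpha-1}/(1+\lambda_n t^\alpha) \le C t^{-1}$, and also $\le C \lambda_n t^{\alpha-1}$. To obtain an $L^2(0,T)$ bound in $t$ one needs to be a little careful near $t=0$: I would split $\int_0^T |\cdot|^2\,dt$ and use the bound $\lambda_n t^{\alpha-1}/(1+\lambda_n t^\alpha) \le \lambda_n^{1/\alpha} \cdot (\lambda_n^{1/\alpha}t)^{\alpha-1}/(1+(\lambda_n^{1/\alpha}t)^\alpha)$, which after the substitution $s = \lambda_n^{1/\alpha}t$ shows $\|\lambda_n t^{\alpha-1}E_{\alpha,\alpha}(-\lambda_n t^\alpha)\|_{L^2(0,T)} \le C\lambda_n^{1/\alpha - 1/(2\alpha)} = C\lambda_n^{1/(2\alpha)}$, using that $\int_0^\infty s^{2(\alpha-1)}/(1+s^\alpha)^2\,ds < \infty$ for $\alpha \in (1/2,1)$ (the integrand is $\sim s^{2\alpha-2}$ at $0$, integrable since $2\alpha - 2 > -1$, and $\sim s^{-2}$ at infinity).

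Combining these, by the triangle inequality in $L^2(0,T)$ and $\|\phi_n\|_{L^\infty}$-type bounds one gets
\begin{equation*}
 \|v_t(x_0,\cdot)\|_{L^2(0,T)} \le C\sum_{n=1}^\infty |f_n|\,\lambda_n^{1/(2\alpha)}\,|\phi_n(x_0)|.
\end{equation*}
It remains to show this series converges. Here I would use the Sobolev embedding: since $f \in \mathcal D(\A^2)$, we have $\sum_n \lambda_n^4 f_n^2 < \infty$, and $\mathcal D(\A^2) \subset H^4(D)$ with $d \le 3$ gives $\|\phi_n\|_{L^\infty(D)} \le C\lambda_n^{d/4} \le C\lambda_n^{3/4}$ (by elliptic regularity/Sobolev embedding applied to $\A\phi_n = \lambda_n\phi_n$). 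Then Cauchy--Schwarz yields
\begin{equation*}
 \sum_n |f_n|\lambda_n^{1/(2\alpha)}|\phi_n(x_0)| \le \Big(\sum_n \lambda_n^4 f_n^2\Big)^{1/2}\Big(\sum_n \lambda_n^{1/\alpha + 3/2 - 4}\Big)^{1/2},
\end{equation*}
and the second sum converges by Weyl's law $\lambda_n \sim c n^{2/d} \ge c n^{2/3}$ provided the exponent $1/\alpha + 3/2 - 4 = 1/\alpha - 5/2$ is negative enough, i.e. $\sum_n \lambda_n^{1/\alpha - 5/2} < \infty$ whenever $(5/2 - 1/\alpha)\cdot(2/d) > 1$; for $d \le 3$ and $\alpha \in (1/2,1)$ this holds (at $\alpha = 1/2$, $1/\alpha - 5/2 = -1/2$, giving exponent $-1/2 \cdot 2/3 = -1/3$ — borderline, so one may need $\mathcal D(\A^2)$ precisely, or a slightly sharper spectral bound). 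The main obstacle is exactly this bookkeeping of exponents: making sure the singularity of $t^{\alpha-1}$ at $t=0$ and the growth of $\lambda_n^{1/(2\alpha)}\|\phi_n\|_\infty$ are simultaneously absorbed by the regularity $f \in \mathcal D(\A^2)$, which is why the hypothesis asks for $H^4$ regularity rather than less; once the exponent count is verified, the constant $C$ depends only on $\|f\|_{\mathcal D(\A^2)}$, $\alpha$ and $T$ as claimed.
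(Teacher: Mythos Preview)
Your approach has a genuine gap at the exponent-counting step, and you even flag it yourself: the bound
\[
\|v_t(x_0,\cdot)\|_{L^2(0,T)} \le C\sum_{n} |f_n|\,\lambda_n^{1/(2\alpha)}\,|\phi_n(x_0)|
\]
combined with $|\phi_n(x_0)|\le C\lambda_n^{d/4}$ and Cauchy--Schwarz requires $\sum_n \lambda_n^{1/\alpha+d/2-4}<\infty$. By Weyl's law this needs $d<4-1/\alpha$, which fails for $d=2,3$ when $\alpha$ is close to $1/2$. So the argument does not close under the stated hypotheses.

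The loss happens when you pull the sum outside with the triangle inequality and then control $|\phi_n(x_0)|$ term by term. The paper avoids this entirely: it bounds the pointwise value by the $H^2$ norm of the whole series via Sobolev embedding,
\[
|v_t(x_0,t)|^2 \le C\Big\|\sum_n \lambda_n t^{\alpha-1}E_{\alpha,\alpha}(-\lambda_n t^\alpha)f_n\phi_n\Big\|_{H^2(D)}^2
= C\sum_n \lambda_n^2 f_n^2\,[\lambda_n t^{\alpha-1}E_{\alpha,\alpha}(-\lambda_n t^\alpha)]^2,
\]
and then uses only the crude bound $|E_{\alpha,\alpha}(-\lambda_n t^\alpha)|\le C$ (no decay in $\lambda_n$ at all) to get $|v_t(x_0,t)|^2 \le C\|f\|_{\mathcal D(\A^2)}^2\,t^{2\alpha-2}$, which is integrable on $(0,T)$ precisely because $\alpha>1/2$. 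The point is that Parseval gives an $\ell^2$ sum rather than the $\ell^1$ sum your triangle inequality produces; this is exactly what the regularity $f\in\mathcal D(\A^2)$ is calibrated for. If you want to salvage your scaling computation $\|\lambda_n t^{\alpha-1}E_{\alpha,\alpha}(-\lambda_n t^\alpha)\|_{L^2(0,T)}^2\le C\lambda_n^{1/\alpha}$, insert it into the $\ell^2$ version instead: you then need $\sum_n \lambda_n^{2+1/\alpha}f_n^2<\infty$, which holds since $2+1/\alpha<4$.
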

\begin{proof}
 From \cite{SakamotoYamamoto:2011}, we can give the representation of 
 $v$ as 
 \begin{equation*}
  v(x,t)=\sum_{n=1}^\infty f_n E_{\alpha,1}(-\lambda_n t^\alpha) \phi_n(x), 
  \quad f_n=\l f(\cdot),\phi_n(\cdot)\rd,
 \end{equation*} 
and the regularity result $v\in C([0,T];H^2(D))$. 
Since $D\in \mathbb{R}^d,\ 1\le d\le 3$, from the Sobolev inequalities, 
we have that $v(\cdot,t)\in C(D)$ and 
$\|v(\cdot,t)\|_{C(D)}\le C\|v(\cdot,t)\|_{H^2(D)}$. 
By this continuous regularity, we can write $v(x_0,t)$ as the 
convergent series,  
\begin{equation*}
 v(x_0,t)=\sum_{n=1}^\infty  E_{\alpha,1}(-\lambda_n t^\alpha) 
 f_n\phi_n(x_0)
\end{equation*}
and denote its partial sum as $v_N(x_0,t)$, i.e. 
$v_N(x_0,t)=\sum_{n=1}^N E_{\alpha,1}(-\lambda_n t^\alpha) 
 f_n\phi_n(x_0)$. Recall the formula  
\begin{equation*}
 [E_{\alpha,1}(-\lambda_n t^\alpha)]'=-\lambda_n t^{\alpha-1} 
E_{\alpha,\alpha}(-\lambda_n t^\alpha),
\end{equation*}
and define 
\begin{equation*}
\begin{aligned}
 V(t)&=-\sum_{n=1}^\infty \lambda_n t^{\alpha-1} 
E_{\alpha,\alpha}(-\lambda_n t^\alpha) f_n\phi_n(x_0),\\
V_N(t)&=-\sum_{n=1}^N\lambda_n t^{\alpha-1} 
E_{\alpha,\alpha}(-\lambda_n t^\alpha) f_n\phi_n(x_0).
\end{aligned}
\end{equation*}
Next, we will show that given $\epsilon>0$, 
$v_t(x_0,t)=V(t)$ on $[\epsilon,T]$. 

For $t\in [\epsilon,T]$, the following estimate holds by Lemma 
\ref{lem:mittag_bound}, 
\begin{equation*}
 |\lambda_n t^{\alpha-1} E_{\alpha,\alpha}(-\lambda_n t^\alpha)| 
 \le C\frac{\lambda_n t^{\alpha-1}}{1+\lambda_n t^\alpha}\le C \epsilon^{-1}.
\end{equation*}
Then with Sobolev inequalities, we have 
\begin{equation*}
 \begin{aligned}
  |V(t)-V_N(t)|^2\le& \Big\|\sum_{n=N+1}^\infty \lambda_n t^{\alpha-1} 
E_{\alpha,\alpha}(-\lambda_n t^\alpha) f_n\phi_n(\cdot)\Big\|^2_{C(D)}\\
\le &C \Big\|\sum_{n=N+1}^\infty \lambda_n t^{\alpha-1} 
E_{\alpha,\alpha}(-\lambda_n t^\alpha) f_n\phi_n(\cdot)\Big\|^2_{H^2(D)}\\
\le& C \sum_{n=N+1}^\infty [\lambda_n t^{\alpha-1} 
E_{\alpha,\alpha}(-\lambda_n t^\alpha)]^2 \lambda_n^2f_n^2\\
\le &C\epsilon^{-2} \sum_{n=N+1}^\infty \lambda_n^2f_n^2.
 \end{aligned}
\end{equation*}
Since $\sum_{n=1}^\infty\lambda_n^2f_n^2 =\|f\|^2_{\mathcal{D}(\mathcal{A})}<\infty,$ 
the upper bound $C\epsilon^{-2} \sum_{n=N+1}^\infty \lambda_n^2f_n^2$ 
will converge to zero as $N\to \infty$, and note that it is independent 
of $t$. Then we can conclude that the series $V(t)$ is uniformly 
convergent on $[\epsilon,T]$. Realize that $V(t)$ is derived 
from $v(x_0,t)$ by termwise differentiation, then the uniform 
convergence of $V(t)$ and the convergence of $v(x_0,t)$ give that 
$V(t)=v_t(x_0,t)$ on $[\epsilon,T],\ \epsilon>0$.

Now we can show $v_t(x_0,t)\in L^2(0,T)$. For $t\in [\epsilon,T]$, 
\begin{equation*}
 \begin{aligned}
  |v_t(x_0,t)|^2&\le \Big\|\sum_{n=1}^\infty \lambda_n t^{\alpha-1} 
E_{\alpha,\alpha}(-\lambda_n t^\alpha) f_n\phi_n(\cdot)\Big\|^2_{C(D)}\\
  &\le C \Big\|\sum_{n=1}^\infty \lambda_n t^{\alpha-1} 
E_{\alpha,\alpha}(-\lambda_n t^\alpha) f_n\phi_n(\cdot)\Big\|^2_{H^2(D)}\\
&\le C t^{2\alpha-2}\sum_{n=1}^\infty 
[E_{\alpha,\alpha}(-\lambda_n t^\alpha)]^2 \lambda_n^4f_n^2\\
&\le C t^{2\alpha-2} \sum_{n=1}^\infty \lambda_n^4f_n^2
\le C \|f\|^2_{\mathcal{D}(\mathcal{A}^2)} t^{2\alpha-2}.
 \end{aligned}
\end{equation*}
Consequently, recalling that $\alpha\in(1/2,1)$, 
\begin{equation*}
 \begin{aligned}
  \|v_t(x_0,\cdot)\|^2_{L^2(\epsilon,T)}
  &\le C(f)\int_\epsilon^T t^{2\alpha-2}\ dt\\
  &=C(f,\alpha)(T^{2\alpha-1}-\epsilon^{2\alpha-1})\le C(f,\alpha,T)<\infty.
 \end{aligned}
\end{equation*}
Since $C(f,\alpha,T)$ is independent of the choice of $\epsilon$, we have 
\begin{equation*}
  \|v_t(x_0,\cdot)\|^2_{L^2(0,T)} 
  =\lim_{\epsilon\to0^+}  \|v_t(x_0,\cdot)\|^2_{L^2(\epsilon,T)}
  \le C(f,\alpha,T)<\infty, 
\end{equation*}
which completes the proof.
\end{proof}

\section{Main results}\label{section:main}
In this section we will give the proofs of Theorem \ref{stability} 
and Proposition \ref{uniqueness}. 

\subsection{The second kind Volterra equation}

The next lemma yields the representation of the weak solution $u(x,t,\o)$.
\begin{lemma}\label{lem:uv}
 Under Definition \ref{weak solution}, the weak solution $u$ 
 can be written as
 \begin{equation}\label{duhamel}
  \begin{aligned}
   u(x,t,\o)=\I g(t,\o) f(x)+\int_0^t \I g(\tau,\o) v_t(x,t-\tau)\ d\tau,
  \end{aligned}
 \end{equation}
 $v(x,t)$ satisfies equation \eqref{FDE_v}. 
 \end{lemma}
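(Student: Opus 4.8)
The plan is to derive \eqref{duhamel} by the Duhamel principle / superposition, treating $\mathcal{A}$ spectrally. First I would expand everything in the eigenbasis $\{\phi_n\}$. Writing $u(x,t,\o)=\sum_n u_n(t,\o)\phi_n(x)$ and $f=\sum_n f_n\phi_n$, the weak formulation in Definition \ref{weak solution}, tested against $\psi=\phi_n$, becomes the scalar identity $u_n(t,\o)+\lambda_n \I u_n(t,\o)=f_n\, \I g(t,\o)$ for each $n$ and (a.e.) $\o$. This is a scalar second-kind Volterra equation whose solution, via the resolvent of the kernel $\Gamma(\alpha)^{-1}t^{\alpha-1}$, is classically
\begin{equation*}
 u_n(t,\o)=f_n\Big[\I g(t,\o)-\lambda_n\int_0^t (t-\tau)^{\alpha-1}E_{\alpha,\alpha}(-\lambda_n (t-\tau)^\alpha)\,\I g(\tau,\o)\,d\tau\Big],
\end{equation*}
using $\frac{d}{ds}E_{\alpha,1}(-\lambda_n s^\alpha)=-\lambda_n s^{\alpha-1}E_{\alpha,\alpha}(-\lambda_n s^\alpha)$. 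Recognizing the bracketed kernel $-\lambda_n s^{\alpha-1}E_{\alpha,\alpha}(-\lambda_n s^\alpha)$ as $\partial_s\big[E_{\alpha,1}(-\lambda_n s^\alpha)\big]$, and recalling from the proof of Lemma \ref{lem:vt} that $v(x,t)=\sum_n f_n E_{\alpha,1}(-\lambda_n t^\alpha)\phi_n(x)$ solves \eqref{FDE_v}, we have $v_t(x,s)=\sum_n f_n\big(-\lambda_n s^{\alpha-1}E_{\alpha,\alpha}(-\lambda_n s^\alpha)\big)\phi_n(x)$. Summing the $u_n$ against $\phi_n$ then gives exactly \eqref{duhamel}.

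The key steps in order: (i) reduce the weak formulation to the scalar Volterra equation for each mode; (ii) solve that scalar equation explicitly with the Mittag-Leffler resolvent kernel; (iii) identify the resolvent kernel with $\partial_t E_{\alpha,1}(-\lambda_n t^\alpha)$ and hence with the modal components of $v_t$; (iv) reassemble the series and justify that the term-by-term convolution equals the convolution $\int_0^t \I g(\tau,\o)\,v_t(x,t-\tau)\,d\tau$. Alternatively, and perhaps cleaner to write, one can avoid re-deriving the Volterra solution by verification: define $w(x,t,\o)$ to be the right-hand side of \eqref{duhamel} and check directly that it satisfies the identity in Definition \ref{weak solution}, using $\I\D=\mathrm{id}-(\cdot)|_{t=0}$, the equation \eqref{FDE_v} for $v$, and the associativity of the fractional integral with the convolution; uniqueness of the weak solution then forces $u=w$.

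The main obstacle is analytic rather than algebraic: justifying the interchange of the infinite sum over $n$ with the time-convolution integral (and, in the verification approach, with $\I$ and with the pairing $\l\cdot,\phi_n\rd$). The convolution integrand involves $v_t(x,t-\tau)$, whose series only converges for $t-\tau$ bounded away from $0$ — precisely the delicacy already handled in Lemma \ref{lem:vt}, where $|v_t(x_0,s)|\le C s^{\alpha-1}$ with $\alpha-1\in(-1/2,0)$, so the singularity at $\tau=t$ is integrable. One must also keep $\o$ fixed throughout, noting that $\I g(\cdot,\o)$ is, for a.e.\ $\o$, a well-defined continuous (indeed, for $\alpha>1/2$, Hölder) function of $t$ by the remark following Definition \ref{weak solution} and Lemma \ref{Ito isometry formula}, so that all the deterministic manipulations are legitimate pathwise. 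Dominated convergence, together with the uniform-in-$t$ tail bounds from Lemma \ref{lem:vt} and the boundedness $|E_{\alpha,\alpha}(-\lambda_n s^\alpha)|\le C/(1+\lambda_n s^\alpha)$ from Lemma \ref{lem:mittag_bound}, supplies the required justification.
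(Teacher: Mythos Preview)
Your primary route---spectral expansion, solving the scalar Volterra equation for each mode via the Mittag--Leffler resolvent, and reassembling---is correct and is genuinely different from what the paper does. The paper instead takes precisely your ``alternative'' verification route: it \emph{defines} $u$ by the right-hand side of \eqref{duhamel} and checks the identity in Definition \ref{weak solution} by computing $\I\A$ of the convolution term directly, using $\A v_t=-\partial_t(\D v)$ and the composition $\partial_t^{1-\alpha}(\D v)=v_t+\Gamma(\alpha)^{-1}t^{\alpha-1}\A f$, whence $\I\A u=f\,\I g-u$; no eigenexpansion or Mittag--Leffler resolvent appears, and uniqueness is not invoked.

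The trade-off: your spectral argument is constructive (it explains where the formula comes from and, as a by-product, shows that any weak solution with $L^2(D)$-valued modes must coincide with it), but it carries the burden of justifying the sum--integral interchange at the $t-\tau\to 0$ endpoint that you flag. The paper's verification avoids the infinite series entirely and is shorter, at the cost of being non-constructive and tacitly treating \eqref{duhamel} as \emph{the} weak solution without a separate uniqueness statement. One small caveat on your write-up: pathwise H\"older continuity of the stochastic part of $\I g(\cdot,\o)$ is not established in the paper (only well-definedness via Lemma \ref{Ito isometry formula}); for the manipulations you need, measurability and local integrability in $t$ for a.e.\ $\o$ suffice, and that is what you should claim.
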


\begin{proof}
Note that
\begin{equation*}
 \A v_t=(\A v)_t=-\partial(\D v)/\partial t,
\end{equation*}
then we have
  \begin{equation*}
  \begin{aligned}
   &\I \A \int_0^t \I g(\tau,\o) v_t(x,t-\tau)\ d\tau\\
   =&-\I \int_0^t \I g(\tau,\o) \frac{\partial (\D v)}{\partial t}
   (x,t-\tau)\ d\tau\\
   =&-\Gamma(\alpha)^{-1}\int_0^t (t-s)^{\alpha-1}\int_0^s
   \I g(\tau,\o) \frac{\partial (\D v)}{\partial t}
   (x,s-\tau)\ d\tau\ ds\\
   =&-\Gamma(\alpha)^{-1}\int_0^t \I g(\tau,\o)\int_\tau^t
    (t-s)^{\alpha-1}\frac{\partial (\D v)}{\partial t}
   (x,s-\tau)\ ds\ d\tau.
   \end{aligned}
  \end{equation*}
From direct calculation, we have 
\begin{equation*}
\begin{aligned}
 \Gamma(\alpha)^{-1}\int_\tau^t (t-s)^{\alpha-1}
 \frac{\partial (\D v)}{\partial t}(x,s-\tau)\ ds
 &=\partial_t^{1-\alpha}(\D v)(x,t-\tau)\\
 &=v_t(x,t-\tau)-\Gamma(\alpha)^{-1}(t-\tau)^{\alpha-1}\D v(x,0)\\
 &=v_t(x,t-\tau)+\Gamma(\alpha)^{-1}(t-\tau)^{\alpha-1}\A f(x).
 \end{aligned}
\end{equation*}
Hence,
\begin{equation*}
 \begin{aligned}
  &\I \A \int_0^t \I g(\tau,\o) v_t(x,t-\tau)\ d\tau\\
  =&-\int_0^t \I g(\tau,\o)v_t(x,t-\tau)\ d\tau
   -\Gamma(\alpha)^{-1}\A f(x)
  \int_0^t \I g(\tau,\o) (t-\tau)^{\alpha-1}\ d\tau\\
  =&-\int_0^t \I g(\tau,\o)v_t(x,t-\tau)\ d\tau
   -(\I)^2 g(t,\o)\ \A f(x),
 \end{aligned}
\end{equation*}
which leads to
\begin{equation*}
\begin{aligned}
 \I\A u&=
 \I \A \int_0^t \I g(\tau,\o) v_t(x,t-\tau)\ d\tau
 +(\I)^2 g(t,\o)\ \A f(x)\\
 &=-\int_0^t \I g(\tau,\o)v_t(x,t-\tau)\ d\tau\\
 &=f\I g-u.
\end{aligned}
\end{equation*}
Moreover, we can derive $v_t(\cdot,t)\in L^2(D)$ from 
the analysis in \cite{SakamotoYamamoto:2011} and the condition 
$f\in \mathcal{D}(\mathcal{A}^2)$. Inserting these regularity estimates 
in \eqref{duhamel} yields that $u(\cdot,t,\o)\in L^2(D)$. 

Now we have $u$ in \eqref{duhamel} satisfies 
Definition \ref{weak solution} and complete the proof.
\end{proof}

With Lemmas \ref{Ito isometry formula} and \ref{lem:uv},
 the lemma below follows, which includes the unknowns $g_1,g_2$ 
and the statistical moments of $h(t,\o)$.

\begin{lemma}
Define 
 $$G_1(t)=\int_0^t g_1(\tau)\ d\tau,\quad  G_2(t)=\int_0^t g^2_2(\tau)\ d\tau,$$ 
 then it holds that for $t\in (0,T],$
\begin{equation}\label{integral equation}
 \begin{aligned}
  G_1(t)&=f^{-1}(x_0)\E[ I^{1-\alpha}_t h(t,\o)]-f^{-1}(x_0)\int_0^t G_1(\tau)v_t(x_0,t-\tau)\ d\tau,\\
  G_2(t)&=f^{-2}(x_0)\V[ I^{1-\alpha}_t h(t,\o)]-2f^{-2}(x_0)\int_0^t G_2(\tau) v(x_0,t-\tau)
   v_t(x_0,t-\tau)\ d\tau.
 \end{aligned}
\end{equation}
\end{lemma}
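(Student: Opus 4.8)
The plan is to apply the operator $I^{1-\alpha}_t$ to the Duhamel-type representation of $h(t,\o)=u(x_0,t,\o)$ provided by Lemma \ref{lem:uv}, and then to isolate the deterministic and stochastic contributions by taking $\E$ and $\V$. Writing $M(t):=\int_0^t g_2(\tau)\,d\W(\tau)$, the first step is the identity $I^{1-\alpha}_t[\I g(t,\o)]=G_1(t)+M(t)$: for the $g_1$-part this is the semigroup property $I^{1-\alpha}_tI^{\alpha}_t=I^1_t$, while for the $g_2$-part one applies the stochastic Fubini theorem to move the Riemann--Liouville kernel past $d\W$, after which the iterated kernel collapses via the Beta integral $\int_s^t(t-r)^{-\alpha}(r-s)^{\alpha-1}\,dr=\Gamma(1-\alpha)\Gamma(\alpha)$. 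Since $I^{1-\alpha}_t$ is convolution with $t^{-\alpha}/\Gamma(1-\alpha)$, it also commutes with the time-convolution against $v_t(x_0,\cdot)$ in \eqref{duhamel}, so
\begin{equation*}
 I^{1-\alpha}_t h(t,\o)=f(x_0)\big[G_1(t)+M(t)\big]+\int_0^t\big[G_1(\tau)+M(\tau)\big]v_t(x_0,t-\tau)\,d\tau.
\end{equation*}

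Taking expectations and using $\E[M(\tau)]=0$ for every $\tau$ annihilates the stochastic terms, and dividing by $f(x_0)\neq0$ (Assumption \ref{assumption}) gives the first line of \eqref{integral equation}. For the second line, the strategy is to examine the centered quantity $I^{1-\alpha}_t h(t,\o)-\E[I^{1-\alpha}_t h(t,\o)]=f(x_0)M(t)+\int_0^t M(\tau)v_t(x_0,t-\tau)\,d\tau$ and to collapse it to a single Ito integral. A second application of stochastic Fubini gives $\int_0^t M(\tau)v_t(x_0,t-\tau)\,d\tau=\int_0^t g_2(s)\big(\int_s^t v_t(x_0,t-\tau)\,d\tau\big)\,d\W(s)=\int_0^t g_2(s)\big(v(x_0,t-s)-f(x_0)\big)\,d\W(s)$, so the centered quantity equals $\int_0^t g_2(s)\,v(x_0,t-s)\,d\W(s)$. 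The Ito isometry (Lemma \ref{Ito isometry formula}), whose hypotheses hold because $g_2\in C[0,T]$ and $v(x_0,\cdot)$ is bounded and continuous (from $v\in C([0,T];H^2(D))$ and Sobolev embedding, as in Lemma \ref{lem:vt}), then yields $\V[I^{1-\alpha}_t h(t,\o)]=\int_0^t g_2^2(s)\,v^2(x_0,t-s)\,ds$. Integrating by parts with $G_2'=g_2^2$, $G_2(0)=0$, $v(x_0,0)=f(x_0)$ and $\partial_t v^2(x_0,\cdot)=2v\,v_t(x_0,\cdot)\in L^1(0,T)$ converts this into $f^2(x_0)G_2(t)+2\int_0^t G_2(s)\,v(x_0,t-s)\,v_t(x_0,t-s)\,ds$, and dividing by $f^2(x_0)$ gives the second line of \eqref{integral equation}.

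The main obstacle I expect is the rigorous justification of the two stochastic Fubini interchanges and the commutation of $I^{1-\alpha}_t$ with the Ito integral: one has to verify joint measurability and the square-integrability bounds that license swapping $\int_0^t(\cdot)\,dr$ with $\int_0^r(\cdot)\,d\W(s)$. This is precisely where the standing hypothesis $\alpha\in(1/2,1)$ is used, making $(t-\tau)^{\alpha-1}g_2(\tau)$ and the iterated kernels square-integrable on $[0,T]$, in tandem with the $L^2(0,T)$-bound on $v_t(x_0,\cdot)$ from Lemma \ref{lem:vt}, which controls the deterministic convolutions. Once these measure-theoretic points are settled, everything else reduces to elementary convolution identities, the Beta function, and ordinary integration by parts.
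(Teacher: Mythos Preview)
Your proposal is correct and follows essentially the same path as the paper: apply $I^{1-\alpha}_t$ to the Duhamel representation from Lemma~\ref{lem:uv}, collapse the iterated fractional kernels via the Beta integral, take $\E$ and $\V$ using the Ito isometry, and finish with integration by parts against $G_1,G_2$. The only organizational difference is that the paper first simplifies to the single clean formula $I^{1-\alpha}_t h(t,\o)=\int_0^t g_1(\tau)v(x_0,t-\tau)\,d\tau+\int_0^t g_2(\tau)v(x_0,t-\tau)\,d\W(\tau)$ and reads off both moments from there, whereas you split into $G_1+M$ early and need a second stochastic Fubini on $\int_0^t M(\tau)v_t(x_0,t-\tau)\,d\tau$ to recover the same Ito integral; both routes are equivalent.
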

\begin{proof}
From \eqref{duhamel}, we can obtain the following result
\begin{equation*}
\begin{aligned}
 I^{1-\alpha}_t u(x,t,\o)
 =& \frac{f(x)}{\Gamma(\alpha)\Gamma(1-\alpha)}\int_0^t 
 (t-\tau)^{-\alpha}\int_0^\tau (\tau-s)^{\alpha-1} g(s,\o)\ ds\ d\tau\\
 &+ \frac{1}{\Gamma(1-\alpha)}\int_0^t (t-\tau)^{-\alpha}\int_0^\tau 
 \I g(\tau-s,\o) v_t(x,s)\ ds\ d\tau \\
 =& \frac{f(x)}{\Gamma(\alpha)\Gamma(1-\alpha)}\int_0^t g(s,\o)
 \int_s^t (t-\tau)^{-\alpha}(\tau-s)^{\alpha-1} \ d\tau\ ds\\
 &+ \frac{1}{\Gamma(1-\alpha)}\int_0^t v_t(x,s)\int_s^t 
 (t-\tau)^{-\alpha}\ \I g(\tau-s,\o) \ d\tau\ ds \\
=& \frac{1}{\Gamma(\alpha)\Gamma(1-\alpha)}\Bigg[ f(x)\int_0^t g(s,\o)
 \int_s^t (t-\tau)^{-\alpha}(\tau-s)^{\alpha-1} \ d\tau\ ds\\
 &+\int_0^t v_t(x,s)\int_0^{t-s} g(r,\o) \int_r^{t-s}(t-s-\tau)^{-\alpha}
 (\tau-r)^{\alpha-1} \ d\tau\ dr\ ds\Bigg].
 \end{aligned}
\end{equation*}
Due to 
\begin{equation*}
 \int_s^t (t-\tau)^{-\alpha}(\tau-s)^{\alpha-1} \ d\tau
 =B(1-\alpha,\alpha)=\Gamma(1-\alpha)\Gamma(\alpha)/\Gamma(1),
\end{equation*}
where $B$ is the Beta function, 
we have 
\begin{equation*}
\begin{aligned}
 I^{1-\alpha}_t u(x,t,\o)=&f(x)\int_0^t g(s,\o)\ ds
 +\int_0^t v_t(x,s)\int_0^{t-s} g(r,\o) \ dr\ ds\\
 =&f(x)\int_0^t g(s,\o)\ ds
 +\int_0^t g(r,\o) [v(x,t-r)-v(x,0)]\ dr\\
 =&\int_0^t g(\tau,\o) v(x,t-\tau)\ d\tau,
 \end{aligned}
\end{equation*}
i.e.
\begin{equation*}
 I^{1-\alpha}_t u(x,t,\o)=\int_0^t g_1(\tau) v(x,t-\tau)\ d\tau
 +\int_0^t g_2(\tau) v(x,t-\tau)\ d\W(\tau).
\end{equation*}
Hence,
\begin{equation}\label{equality_2}
 I^{1-\alpha}_t h(t,\o)=\int_0^t g_1(\tau) v(x_0,t-\tau)\ d\tau
 +\int_0^t g_2(\tau) v(x_0,t-\tau)\ d\W(\tau).
\end{equation} 
Applying Lemma \ref{Ito isometry formula} to \eqref{equality_2}, 
the following result can be derived,  
\begin{equation}\label{moment_unknown}
\begin{aligned}
 \E[ I^{1-\alpha}_t h(t,\o)]
 &=\int_0^t g_1(\tau)  v(x_0,t-\tau)\ d\tau,\\
 \V[ I^{1-\alpha}_t h(t,\o)]
 &=\int_0^t g_2^2(\tau) [ v(x_0,t-\tau)]^2\ d\tau,\quad t\in(0,T].
 \end{aligned}
\end{equation}
The right hand sides of above equations can be written as 
 \begin{equation*}
  \begin{aligned}
   \int_0^t g_1(\tau)  v(x_0,t-\tau)\ d\tau
   &=\int_0^t v(x_0,t-\tau)\ d(G_1(\tau))\\
   &=f(x_0)G_1(t)+\int_0^t G_1(\tau)v_t(x_0,t-\tau)\ d\tau,\\
   \int_0^t g_2^2(\tau) [ v(x_0,t-\tau)]^2\ d\tau
   &=\int_0^t [ v(x_0,t-\tau)]^2\ d(G_2(\tau))\\
   &=f^2(x_0)G_2(t)+2\int_0^t G_2(\tau) v(x_0,t-\tau)
   v_t(x_0,t-\tau)\ d\tau,
  \end{aligned}
 \end{equation*}
 which together with \eqref{moment_unknown} yields the desired result. 
\end{proof}

\subsection{Proof of Theorem \ref{stability} and Proposition \ref{uniqueness}}

\begin{lemma}\label{G1G2}
\begin{equation*}
\begin{aligned}
 \|G_1\|_{L^2(0,T)}&\le C \|\E[ I^{1-\alpha}_t h(\cdot,\o)]\|_{L^2(0,T)},\\ 
 \|G_2\|_{L^2(0,T)}&\le C \|\V[ I^{1-\alpha}_t h(\cdot,\o)]\|_{L^2(0,T)},
\end{aligned}
 \end{equation*}
 where the constant $C$ does not depend on $h$. 
\end{lemma}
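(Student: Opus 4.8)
The plan is to treat the identities \eqref{integral equation} as a pair of (decoupled) linear Volterra integral equations of the second kind with convolution kernels,
\begin{equation*}
G_i(t)=F_i(t)+\int_0^t K_i(t-\tau)\,G_i(\tau)\,d\tau,\qquad i=1,2,\ t\in(0,T],
\end{equation*}
where $K_1(s)=-f^{-1}(x_0)\,v_t(x_0,s)$, $K_2(s)=-2f^{-2}(x_0)\,v(x_0,s)\,v_t(x_0,s)$, and $F_1(t)=f^{-1}(x_0)\,\E[I^{1-\alpha}_t h(t,\o)]$, $F_2(t)=f^{-2}(x_0)\,\V[I^{1-\alpha}_t h(t,\o)]$; the factors $f^{-1}(x_0),f^{-2}(x_0)$ are legitimate because $f(x_0)\neq0$ by Assumption \ref{assumption}. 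It then suffices to establish the linear a priori bound $\|G_i\|_{L^2(0,T)}\le C_i\|F_i\|_{L^2(0,T)}$ with $C_i$ independent of the data, since substituting the explicit $F_i$ and absorbing $f^{-1}(x_0),f^{-2}(x_0)$ into the constant gives exactly the assertion.

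First I would verify that both kernels lie in $L^2(0,T)$. Lemma \ref{lem:vt} gives $v_t(x_0,\cdot)\in L^2(0,T)$, and (as in its proof) $v\in C([0,T];H^2(D))$ together with the Sobolev embedding $H^2(D)\hookrightarrow C(D)$, valid for $1\le d\le3$, shows $v(x_0,\cdot)\in C[0,T]$, so that
\begin{equation*}
\|K_1\|_{L^2(0,T)}\le|f(x_0)|^{-1}\|v_t(x_0,\cdot)\|_{L^2(0,T)},\qquad \|K_2\|_{L^2(0,T)}\le2|f(x_0)|^{-2}\|v(x_0,\cdot)\|_{C[0,T]}\|v_t(x_0,\cdot)\|_{L^2(0,T)}
\end{equation*}
are finite and depend only on $f,\alpha,T$, not on $h$. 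Moreover $G_1(t)=\int_0^t g_1(\tau)\,d\tau$ and $G_2(t)=\int_0^t g_2^2(\tau)\,d\tau$ are $C^1$ on $[0,T]$ since $g_1,g_2\in C[0,T]$, so the manipulations below are legitimate.

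The estimate itself is a Grönwall argument exploiting the square integrability of $K_i$. Applying the Cauchy--Schwarz inequality to the convolution term, $\big|\int_0^t K_i(t-\tau)G_i(\tau)\,d\tau\big|\le\|K_i\|_{L^2(0,T)}\big(\int_0^t|G_i(\tau)|^2\,d\tau\big)^{1/2}$, and then squaring the integral equation and using $(a+b)^2\le2a^2+2b^2$, I get the pointwise bound
\begin{equation*}
|G_i(t)|^2\le2|F_i(t)|^2+2\|K_i\|_{L^2(0,T)}^2\int_0^t|G_i(\tau)|^2\,d\tau,\qquad t\in(0,T].
\end{equation*}
With $\phi_i(t)=\int_0^t|G_i(\tau)|^2\,d\tau$ this reads $\phi_i'(t)\le2|F_i(t)|^2+2\|K_i\|_{L^2(0,T)}^2\phi_i(t)$; multiplying by $e^{-2\|K_i\|_{L^2(0,T)}^2 t}$ and integrating over $[0,T]$ gives
\begin{equation*}
\|G_i\|_{L^2(0,T)}^2=\phi_i(T)\le2\,e^{2\|K_i\|_{L^2(0,T)}^2 T}\,\|F_i\|_{L^2(0,T)}^2,
\end{equation*}
which is the claimed bound with $C=\sqrt2\,e^{\|K_i\|_{L^2(0,T)}^2 T}|f(x_0)|^{-i}$, depending only on $f,\alpha,T$.

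The only genuinely load-bearing step is the first one: the argument needs the kernels to be at least $L^1(0,T)$, and the square integrability of $v_t(x_0,\cdot)$ that delivers this is precisely Lemma \ref{lem:vt}, which in turn relies on $f\in\mathcal{D}(\mathcal{A}^2)$ and $\alpha\in(1/2,1)$. Everything else is routine. As an alternative to producing the explicit Grönwall constant, one could instead invoke the classical fact that a Volterra convolution operator with an $L^1$ kernel is quasi-nilpotent on $L^2(0,T)$, so that $I-\mathcal{K}_i$ is boundedly invertible with operator norm depending only on the kernel, giving $\|G_i\|_{L^2(0,T)}\le\|(I-\mathcal{K}_i)^{-1}\|\,\|F_i\|_{L^2(0,T)}$.
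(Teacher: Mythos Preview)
Your argument is correct and rests on the same regularity inputs as the paper's proof---namely $v_t(x_0,\cdot)\in L^2(0,T)$ from Lemma \ref{lem:vt} and the boundedness of $v(x_0,\cdot)$ on $[0,T]$---but the route to the $L^2$ bound is genuinely different. The paper invokes resolvent-kernel theory: it writes $G_i$ explicitly as $F_i$ plus a convolution of $F_i$ with a resolvent kernel $R_i$, cites \cite[Theorem 8.3.3]{Brunner:2017} to assert that $R_i$ is of type $(L^2,T)$ (i.e.\ $\int_0^T\|R_i\|_{L^2(0,t)}^2\,dt<\infty$), and then applies H\"older. You instead stay with the original Volterra equation and close the estimate by a Cauchy--Schwarz/Gr\"onwall argument, which is more elementary, fully self-contained, and even yields an explicit constant $C=\sqrt{2}\,e^{\|K_i\|_{L^2(0,T)}^2 T}|f(x_0)|^{-i}$. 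A minor point: the paper bounds $|v(x_0,t)|$ via the maximum principle (Lemma \ref{lem:max}), whereas you use the Sobolev embedding $H^2(D)\hookrightarrow C(D)$ for $d\le 3$; both are fine here. Your closing remark on quasi-nilpotence is essentially the abstract statement underlying the paper's resolvent-kernel approach.
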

\begin{proof} 
From \cite{Brunner:2017}, equation \eqref{integral equation} 
can be solved as 
\begin{equation*}
\begin{aligned}
 G_1(t)&=f^{-1}(x_0)\E[ I^{1-\alpha}_t h(t,\o)]+\int_0^t R_1(t-\tau) \E[ I^{1-\alpha}_t h(\tau,\o)]\ d\tau,\\
  G_2(t)&=f^{-2}(x_0)\V[ I^{1-\alpha}_t h(t,\o)]+\int_0^t R_2(t-\tau) \V[ I^{1-\alpha}_t h(\tau,\o)]\ d\tau,
  \end{aligned}
\end{equation*}
where $R_1, R_2$ are the resolvent kernels depending on 
$f(x_0),\ v(x_0,\cdot),\ v_t(x_0,\cdot)$.  
With Sobolev inequalities, the conditions $f\in H^4(D)$ and $D\subset
\mathbb{R}^d,\ d=1,2,3$ give that $\|f\|_{C(D)}\le C\|f\|_{H^4(D)}<\infty$. 
This and Lemma \ref{lem:max} ensure the boundedness of $|v(x_0,t)|$ 
on $[0,T]$. Also, from Lemma \ref{lem:vt}, we have 
$v_t(x_0,\cdot)\in L^2(0,T)$. Hence, by 
\cite[Theorem 8.3.3]{Brunner:2017}, the resolvent kernels 
$R_1,R_2$ both belong to the type $(L^2,T)$, namely,  
\begin{equation*}
 \int_0^T \int_0^t |R_j(t-s)|^2\ ds\ dt
 =\int_0^T \|R_j\|^2_{L^2(0,t)}\ dt <\infty,\quad j=1,2.
\end{equation*}

Now let's build the upper bounds of $G_1,G_2$. Holder inequality gives that  
\begin{equation*}
 \begin{aligned}
  \|G_1\|^2_{L^2(0,T)}\le& C(f) \|\E[ I^{1-\alpha}_t h(\cdot,\o)]\|^2_{L^2(0,T)} 
  +\int_0^T \|R_1\|^2_{L^2(0,t)}\ \|\E[ I^{1-\alpha}_t h(\cdot,\o)]\|^2_{L^2(0,t)}\ dt\\
  \le& \Big(C(f)+\int_0^T \|R_1\|^2_{L^2(0,t)}\ dt\Big)
  \ \|\E[ I^{1-\alpha}_t h(\cdot,\o)]\|^2_{L^2(0,T)}\\
  \le & C  \|\E[ I^{1-\alpha}_t h(\cdot,\o)]\|^2_{L^2(0,T)}.
 \end{aligned}
\end{equation*}
Analogously, we can prove that 
$ \|G_2\|_{L^2(0,T)}\le C \|\V[ I^{1-\alpha}_t h(\cdot,\o)]\|_{L^2(0,T)}$. 
The proof is complete.
\end{proof}

Now the stability and uniqueness can be proved. 
\begin{proof}[Proofs of Theorem \ref{stability} and 
Proposition \ref{uniqueness}]
 From Lemma \ref{G1G2} and the definition of $|\cdot|_{H^{-1}}$, we have 
 \begin{equation*}
 \begin{aligned}
 |g_1|_{H^{-1}(0,T)}&\le C \|\E[ I^{1-\alpha}_t h(\cdot,\o)]\|_{L^2(0,T)},\\ 
  |g_2^2|_{H^{-1}(0,T)}&\le C \|\V[ I^{1-\alpha}_t h(\cdot,\o)]\|_{L^2(0,T)}. 
  \end{aligned}
 \end{equation*}
 Also \eqref{moment_unknown} and Lemma \ref{G1G2} give that  
\begin{equation*}
\begin{aligned}
 |g_1-\tilde{g}_1|_{H^{-1}(0,T)}=|g_2^2-\tilde{g}_2^2|_{H^{-1}(0,T)}=0,  
\end{aligned}
\end{equation*}
which together with the continuities of $g_j, \tilde{g}_j, j=1,2$ 
leads to $g_1=\tilde{g}_1,\ g_2^2=\tilde{g}_2^2$.
\end{proof}

\section{Numerical reconstruction}\label{sec:numerical}
In this section, we illustrate the numerical reconstruction
of the unknowns $g_1, g_2^2$ from equation \eqref{integral equation}. 
Firstly we set  
$$ D \times [0,T] = [0,1]^2,\ \alpha = 0.8,\ x_0 = 1/2,
\ \A= -\Delta,\ f(x) = \sin{(\pi x)},$$  
then we consider the following experiments,  
\begin{align*}
 (e1):\ \ &g_1(t) = t + \sin{(2\pi t)} + \sin{(3\pi t)},
 \ g_2(t) = \left \{
 \begin{array}{lr}
     \sin{(2\pi t)} - 0.3, & t\in [0,1/2),\\
     \sin{(2\pi t)} + 0.3, & t\in [1/2, 1],
 \end{array}\right.  \\
 (e2):\ \ & g_1(t) = t + \sin{(2\pi t)} + \sin{(3\pi t)},
 \ g_2(t) = \sin{(\pi t)}, \\
 (e3):\ \ & g_1(t) = \left \{
 \begin{array}{lr}
     \sin{(2\pi t)} - 0.3, & t\in [0,1/2),\\
     \sin{(2\pi t)} + 0.3, & t\in [1/2, 1],
 \end{array} \right.
 \ g_2(t) = \sin{(\pi t)}, \\
(e4):\ \ &  g_1(t) = t + \sin{(2\pi t)} + \sin{(3\pi t)},
 \ g_2(t) =  \left \{
 \begin{array}{ll}
     4, & t\in [0,0.3),\\
     2, & t\in [0.3,0.6),\\
     1, & t\in [0.6, 1],
 \end{array} \right.\\
 (e5):\ \ & g_1(t) = \left \{
 \begin{array}{lr}
     \sin{(2\pi t)} - 0.3, & t\in [0,1/2),\\
     \sin{(2\pi t)} + 0.3, & t\in [1/2, 1],
 \end{array} \right.
\ g_2(t) = \left \{
 \begin{array}{ll}
     4, & t\in [0,0.3),\\
     2, & t\in [0.3,0.6),\\
     1, & t\in [0.6, 1].
 \end{array} \right.
\end{align*}
We add various size of white noises to our observation 
$h(t,\o)$, i.e. $ h^\sigma(t,\o) = h(t,\o) + \xi(t,\o),$ 
with Gaussian independent identically distributions 
$\xi(t,\o) \sim \sigma \mathcal N (0,1)$, i.e. $\xi(t,\o) \sim \mathcal N (0,\sigma^2)$.
Then we average a fractional integral of the point observations 
$h^\sigma(t,\o)$ to obtain the perturbed moments $\hat E^\sigma, \hat V^\sigma$, 
and display the numerical results under different $\sigma$ in 
Section \ref{sec:Num_ob}.

\subsection{Direct solver}
To obtain the point observation $ h(t,\o) = u(x_0,t,\o) $, 
the fractional diffusion equation \eqref{SDE} needs to be solved, and 
to this end, the discretized scheme is constructed as follows. 

For spatial variation $x$, we apply a finite element approach using 
piecewise linear bases $\{\phi_j(x) \}_1^m$, namely 
$ \phi_j(x_k) = \delta_{jk},$ 
where $\{ x_j \}_1^m$ consists of a Delaunay triangulation of 
domain $D$. Then we define the finite element space 
as $\Vc_m=\sp\{\phi_j(x):j=1,\cdots,m\}$. The projections of $u$ and $f$ 
in $\Vc_m$ are defined as 
\begin{align*}
   \tilde{u}(x,t,\o) = \sum_{j=1}^m u(x_j,t,\o) \phi_j(x),
   \quad \tilde{f}(x) = \sum_{j=1}^m f(x_j) \phi_j(x).
\end{align*}
Then considering equation \eqref{SDE}, we have 
\begin{equation*}\label{eq:fem_SDE}
\sum_{j=1}^m \partial_t^\alpha u(x_j,t,\o) \l\phi_j, \phi_i\rd
+  \sum_{j=1}^m u(x_j,t,\o) \l\A \phi_j, \phi_i\rd = 
g(t,\o) \sum_{j=1}^m f(x_j) \l\phi_j, \phi_i\rd.
\end{equation*}
From the above equation, we define the mass matrix $\vec{M}$ 
and stiff matrix $\vec{S}$ w.r.t. basis $\{\phi_j\}_1^m $ as
\begin{align*}
    \vec{M} = \Big[\l\phi_j, \phi_i\rd \Big]_{i,j = 1}^m , 
    \quad   \vec{S} = \Big[ \l\A \phi_j, \phi_i\rd\Big]_{i,j = 1}^m ,
\end{align*}
which will be used to construct the discretized scheme for equation 
\eqref{SDE}.

For the discretization on time, the $L_1$-stepping scheme is used, 
which can be seen in 
\cite{JinLazarovZhou:2016,RundellZhang:2018}. 
Set the discrete time mesh $0 = t_0 < t_1 <\cdots < t_N = T$ and 
denote the time step size as $\Delta t = T/N$. 
Accordingly, the fractional derivate is approximated by
\begin{align*}
    & \partial_t^\alpha \psi(t_1) \approx b_{1,0} (\psi(t_1) - \psi(t_0)),\\
    & \partial_t^\alpha \psi(t_n) \approx \sum_{k=1}^{n-1} (b_{n,k-1} - b_{n,k}) \psi(t_k) 
    + b_{n,n-1}\psi(t_n) - b_{n,0} \psi(t_0) , 
    \quad n=2,\cdots, N,
\end{align*}
where parameters
$$ b_{n,k} = \Gamma(2-\alpha)^{-1} \Delta t^{-\alpha} [(n-k)^{1-\alpha} - (n-k-1)^{1-\alpha}] ,\quad k = 0,\cdots,n-1.$$

For the random term $g(t_n,\o) = g_1(t_n) + g_2(t_n) \dot \W(t_n)$, 
from the property $\W(t)-\W(s)\sim \mathcal{N}(t-s),$ 
the following approximation is given 
$$ \dot \W(t_n) \approx [\W(t_n)-\W(t_{n-1})]/\Delta t \sim 
\Delta t^{-1/2}\mathcal{N}(0,1).$$

Therefore, the discretized scheme for solving equation \eqref{SDE} 
is given as: for $\tilde{u}_n\in \Vc_m,\ n=1,\cdots N,$ its 
vector form $\vec{u}_n$ satisfies 
\begin{equation*}\label{eq:SDE_discrete}
\begin{aligned}
\left( b_{1,0}\vec{M} + \vec{S} \right) \vec{u}_1
=& \vec{M}  \Big( g_1(t_1)\vec f+ g_2(t_1) 
\Delta t^{-1/2}\mathcal{N}(0,1)\vec f+b_{1,0}\vec{u}_0\Big),\\
\left( b_{n,n-1}\vec{M} + \vec{S} \right) \vec{u}_n
=& \vec{M}  \Big( g_1(t_n)\vec f+ g_2(t_n)\Delta t^{-1/2} \mathcal{N}(0,1)\vec f
+\sum_{k=1}^{n-1}(b_{n,k}-b_{n,k-1})\vec{u}_k +b_{n,0}\vec{u}_0\Big),
\end{aligned}
\end{equation*}
noting that $\vec{u}_0=0$ due to the zero initial condition. 
To capture the randomness in system \eqref{SDE}, 
we need to collect numerous realizations for the single point 
solution $u(x_0,t,\o)$. Furthermore, the solution $v(x,t)$ 
defined in equation \eqref{FDE_v} can be numerically simulated 
analogously.

\subsection{Inverse problem and mollification}\label{sec:Num_mol}
Here we consider the inverse problem of solving the unknowns $g_1, g_2^2$ 
and denote the numerical approximations as $\hat g_1,  \hat{g}_2^2$ respectively. 

After measuring the noisy data $h ^\sigma(t,\o)$ disturbed by white noise 
$\sigma$ from $h(t,\o) = u(x_0, t,\o)$, we obtain the mean 
and variance value of the fractional integral $H^\sigma := 
I_t^{1-\alpha} h^\sigma(t,\o)$, denoting as $ E^\sigma(t)$ 
and $ V^\sigma(t)$, respectively. 
To reconstruct $G_1 $ and $G_2$ from \eqref{integral equation}, we need to solve
the second kind Volterra equation
\begin{equation}\label{eq:VoltII}
    Y(t) = X(t) + \int_0^t X(\tau) K(t- \tau) d\tau \triangleq X(t) + \K X(t),\ t \in [0,T].
\end{equation}
One of the common choices is the iterative method with initial guess 
$Y(t)$, namely 
\begin{equation}\label{iteration}
 X_{n+1}(t)=Y(t)-\K X_{n}(t),\ \ X_0(t)=Y(t).
\end{equation}
The convergence of \eqref{iteration} can be assured if 
$Y,K\in L^2[0,T]$, \cite{Tricomi-38}.

Before using iteration \eqref{iteration},  
we introduce the mollification method here. As defined in 
\cite{adams2003sobolev,MurioMejia-35}, a \textbf{mollifier} is a 
nonnegative, real-valued function belong to $C_0^\infty(\R)$. For example we take
$$ J(t) = \left\{
\begin{split}
    & c\cdot exp[ -1/(1-|t|^2) ], & |t|<1, \\
    & 0 ,&|t| \geq 1,
\end{split}\right.
$$
where $c>0$ is chosen so that $\int_{\R} J(t) dt = 1$. Then for any $\epsilon>0$,
$J_\epsilon(t) = \epsilon^{-1} J(t/ \epsilon)$ is a mollifier with compact support belonging to $(-\epsilon, \epsilon)$. The convolution
$$ J_\epsilon *\phi(t) = \int_\R J_\epsilon(t-\tau) \phi(\tau) d\tau $$
is called a mollification or regularization of $\phi$ if the right 
side converges. The convergence of $J_\epsilon * \phi$ as $\epsilon \to 0$ is assured by the regularity of $\phi$.
Back to the moments $E^\sigma(t), V^\sigma(t),\ t\in [0,T]$, 
in order to apply the mollification, we extend the domain 
by symmetric $2T$-periodic extension, denoted by 
$\hat E^\sigma(t), \hat V^\sigma(t)$, i.e.
$$ \hat E^\sigma(t) = \left\{
\begin{split}
    &  E^\sigma(t - 2k T),  & t\in [2k T, (2k+1)T], \\
    &  E^\sigma(2k T - t),  & t\in [(2k-1)T, 2k T],
\end{split}\right.  \quad  k\in\Z,
$$
$$ \hat V^\sigma(t) = \left\{
\begin{split}
    &  V^\sigma(t - 2k T),  & t\in [2k T, (2k+1)T], \\
    &  V^\sigma(2k T - t),  & t\in [(2k-1)T, 2k T],
\end{split}\right.   \quad  k\in\Z.
$$
Therefore, we can solve \eqref{eq:VoltII} and \eqref{iteration} to 
get $\int_0^t \hat g_1(\tau) d\tau$ and $\int_0^t \hat g_2^2(\tau) d\tau$ by letting 
\begin{equation*}
\begin{aligned}
 Y(t)&=f^{-1}(x_0)\ J_\epsilon*\hat E^\sigma(t),\ K(t) = f^{-1}(x_0) v_t(x_0,t),\ \text{or}\\
 Y(t)&=f^{-2}(x_0)\ J_\epsilon*\hat V^\sigma(t),\ K(t) = 2f^{-2}(x_0) v(x_0,t)v_t(x_0,t),
\end{aligned}
\end{equation*}
respectively.

Next we're going to state the meaning of using mollification.
Before that, to measure the accuracy of the reconstruction results, 
we give the $L^2$ error as, 
\begin{align*}
   er(g_1) &= \|g_1-\hat g_1\|_{ L^2[0,T] }
   \approx \Big(  \frac{\Delta t}{2} \sum_{i=0}^N p_i 
   \left| g_1(t_i)-\hat g_1(t_i) \right|^2  \Big)^\frac{1}{2}, \\
er(|g_2|) &= \big\| |g_2|- |\hat g_2| \big\|_{ L^2[0,T] }
   \approx \Big(  \frac{\Delta t}{2} \sum_{i=0}^N p_i 
   \left( |g_2(t_i)|-|\hat g_2(t_i)| \right)^2  \Big)^\frac{1}{2},
\end{align*}
where weights $p_0=p_N=1$, $p_i= 2,\ i=1,\cdots,N-1$. 
It is natural that the more realizations we use, the better the numerical 
results are. This is confirmed by 
Table \ref{tab:sample&err}\footnote{In this subsection, we set 
$\sigma =0$, i.e. no observation noise.}. However, large amount of samples causes 
high cost, as a consequence, the mollification is applied. In 
Table \ref{tab:moleps&err}, we can see the improvement caused by 
mollification. Hence, mollification can help us generate satisfactory 
results with relative less samples. Moreover, taking $(e1)$ as an example, 
Figures \ref{fig:E&g1&sample}, \ref{fig:E&g1&moleps}, \ref{fig:V&g2&sample} and \ref{fig:V&g2&moleps} 
provide visual evidence to support the necessity of mollification. 
\begin{table}[h!]
  \centering
  \caption{Errors without mollification.}
  \label{tab:sample&err}
  \begin{tabular}{c|c|c|c|c|c|c}
    \hline
    \multirow{2}{*}{} & \multicolumn{2}{c|}{$10^3$ samples} & \multicolumn{2}{c|}{$10^4$ samples} & \multicolumn{2}{c}{ $10^5$ samples} \\
    \hline
     & $er(g_1)$ & $er(|g_2|) $  & $er(g_1)$ & $er(|g_2|) $  & $er(g_1)$ & $er(|g_2|) $ \\
    \hline
    $(e1)$ & 0.299794 & 0.056932  &  0.091171 & 0.018218 & 0.028705 & 0.009218 \\
    $(e2)$ & 0.449422 & 0.084018  &  0.142783 & 0.029317 & 0.046238 & 0.012576 \\
    $(e3)$ & 0.447672 & 0.083309  &  0.231421 & 0.043802 & 0.081429 & 0.020119 \\
    $(e4)$ & 1.570322 & 0.288016  &  0.503009 & 0.104327 & 0.151976 & 0.055286 \\
    $(e5)$ & 1.542568 & 0.310195  &  0.509957 & 0.115601 & 0.171542 & 0.057249 \\
    \hline
  \end{tabular}
\end{table}
\begin{table}[h!]
  \centering
  \caption{Errors for different $\epsilon$, $10^3$ realizations used.}
  \label{tab:moleps&err}
  \begin{tabular}{c|c|c|c|c|c|c}
    \hline
    \multirow{2}{*}{} & \multicolumn{2}{c|}{ Without mollification} & \multicolumn{2}{c|}{ $\epsilon=0.005$ } & \multicolumn{2}{c}{ $\epsilon=0.05$  } \\
    \hline
     & $er(g_1)$ & $er(|g_2|) $  & $er(g_1)$ & $er(|g_2|) $  & $er(g_1)$ & $er(|g_2|) $ \\
    \hline
    $(e1)$ & 0.299794 & 0.056932  & 0.124498 & 0.030254 & 0.050135 & 0.055231 \\
    $(e2)$ & 0.449422 & 0.084018  & 0.203604 & 0.039741 & 0.097088 & 0.034309 \\
    $(e3)$ & 0.447672 & 0.083309  & 0.216069 & 0.041439 & 0.096336 & 0.033850 \\
    $(e4)$ & 1.570322 & 0.288016  & 0.827782 & 0.172930 & 0.266984 & 0.262352 \\
    $(e5)$ & 1.542568 & 0.310195  & 0.624018 & 0.195870 & 0.165958 & 0.281415 \\
    \hline
  \end{tabular}
\end{table}

\begin{figure}[htbp]
  \centering

  \subfigure[ $\hat E$ with $10^3$ realizations.]{
  \begin{minipage}[t]{0.33\textwidth}
  \centering
  \includegraphics[width=\textwidth]{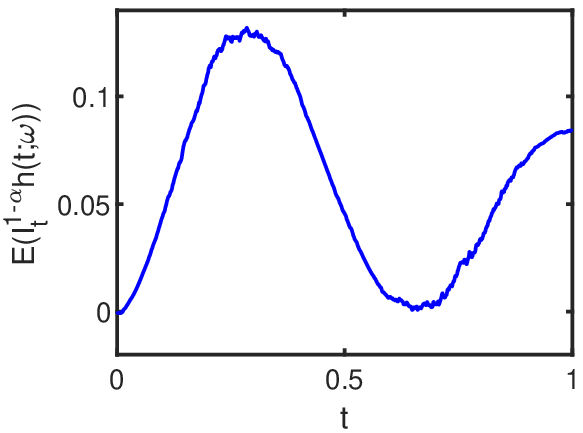}
  \end{minipage}%
  }%
  \subfigure[ $\hat E$ with $10^4$ realizations.]{
  \begin{minipage}[t]{0.33\textwidth}
  \centering
  \includegraphics[width=\textwidth]{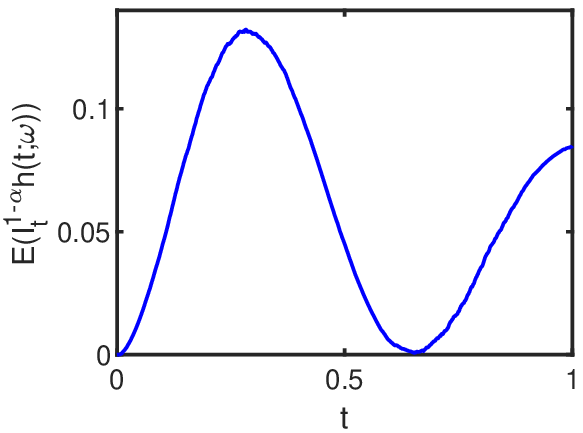}
  \end{minipage}%
  }%
  \subfigure[ $\hat E$ with $10^5$ realizations.]{
  \begin{minipage}[t]{0.33\textwidth}
  \centering
  \includegraphics[width=\textwidth]{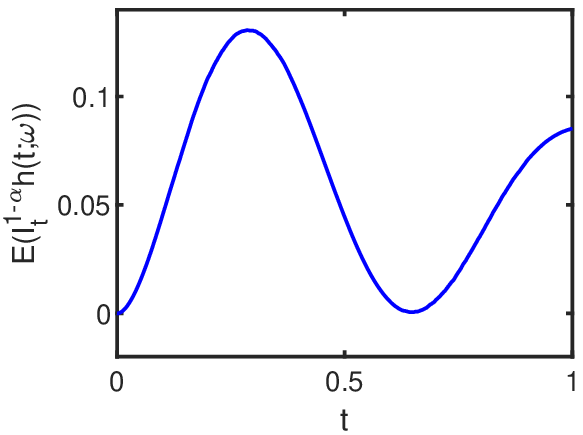}
  \end{minipage}%
  }%
  \quad
  \subfigure[ $\hat g_1$ with $10^3$ realizations.]{
  \begin{minipage}[t]{0.33\textwidth}
  \centering
  \includegraphics[width=\textwidth]{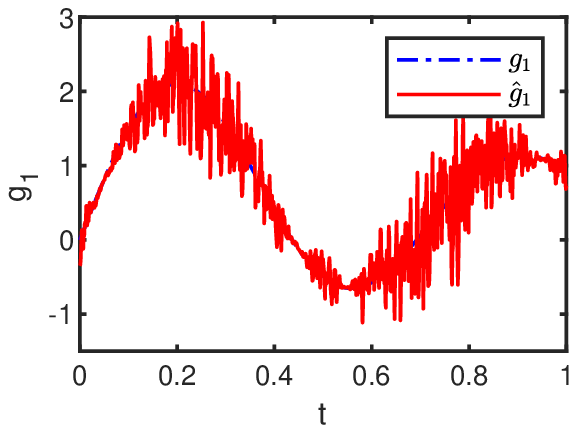}
  \end{minipage}%
  }%
  \subfigure[ $\hat g_1$ with $10^4$ realizations.]{
  \begin{minipage}[t]{0.33\textwidth}
  \centering
  \includegraphics[width=\textwidth]{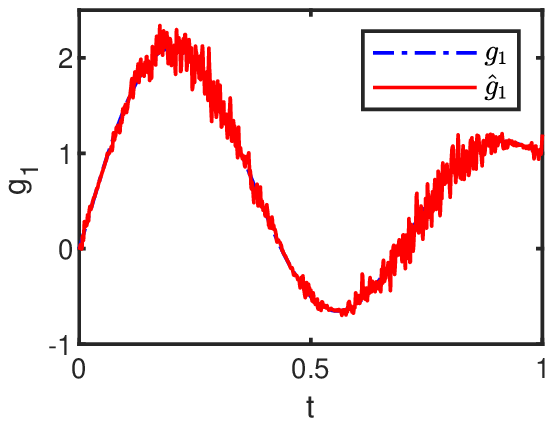}
  \end{minipage}%
  }%
  \subfigure[ $\hat g_1$ with $10^5$ realizations.]{
  \begin{minipage}[t]{0.33\textwidth}
  \centering
  \includegraphics[width=\textwidth]{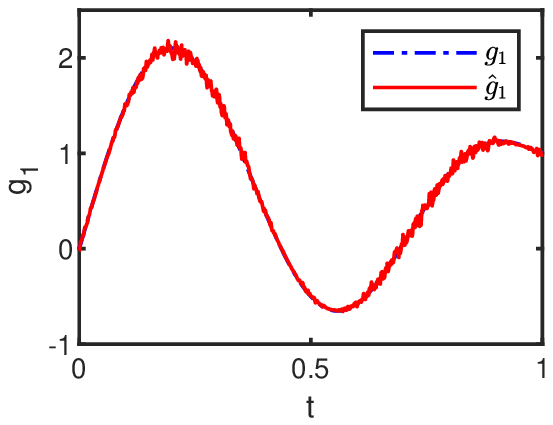}
  \end{minipage}%
  }%

  \centering
  \caption{Experiment $(e1)$, $\hat E$ (top) and $g_1, \hat g_1$ 
  (bottom) for different amount of samples.}\label{fig:E&g1&sample}
\end{figure}
\begin{figure}[htbp]
  \centering

  \subfigure[ $\hat E$ without mollification.]{
  \begin{minipage}[t]{0.33\textwidth}
  \centering
  \includegraphics[width=\textwidth]{e5_E_1000.eps}
  \end{minipage}%
  }%
  \subfigure[ $\hat E, J_\epsilon * \hat E$ when $\epsilon = 0.005$. ]{
  \begin{minipage}[t]{0.33\textwidth}
  \centering
  \includegraphics[width=\textwidth]{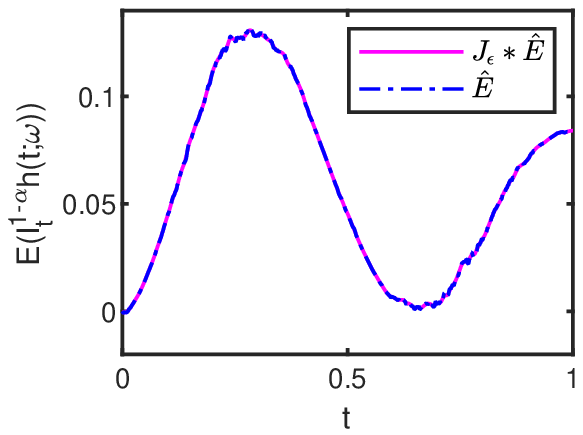}
  \end{minipage}%
  }%
  \subfigure[ $\hat E, J_\epsilon * \hat E$ when $\epsilon = 0.05$. ]{
  \begin{minipage}[t]{0.33\textwidth}
  \centering
  \includegraphics[width=\textwidth]{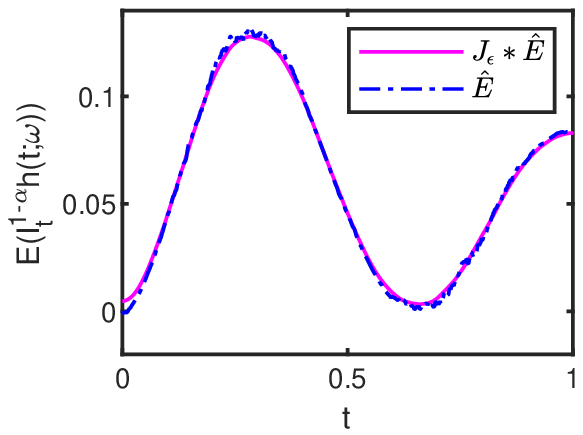}
  \end{minipage}%
  }%
  \quad
  \subfigure[ $\hat g_1$ without mollification. ]{
  \begin{minipage}[t]{0.33\textwidth}
  \centering
  \includegraphics[width=\textwidth]{e5_g1_1000.eps}
  \end{minipage}%
  }%
  \subfigure[ $\hat g_1$ when $\epsilon = 0.005$. ]{
  \begin{minipage}[t]{0.33\textwidth}
  \centering
  \includegraphics[width=\textwidth]{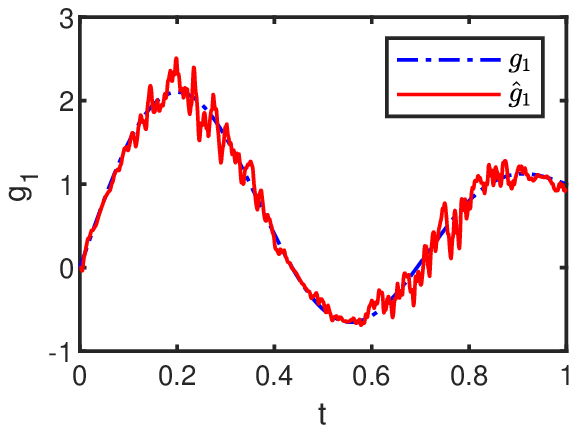}
  \end{minipage}%
  }%
  \subfigure[ $\hat g_1$ when $\epsilon = 0.05$. ]{
  \begin{minipage}[t]{0.33\textwidth}
  \centering
  \includegraphics[width=\textwidth]{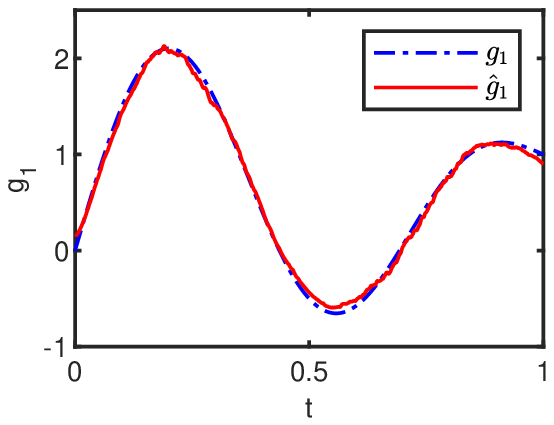}
  \end{minipage}%
  }%

  \centering
  \caption{Experiment $(e1),$ $\hat E,J_\epsilon * \hat E$ (top) and 
  $g_1, \hat g_1$ (bottom) for different $\epsilon$, $10^3$ realizations.}
  \label{fig:E&g1&moleps}
\end{figure}
\begin{figure}[htbp]
  \centering

  \subfigure[ $\hat V$ with $10^3$ realizations.]{
  \begin{minipage}[t]{0.33\textwidth}
  \centering
  \includegraphics[width=\textwidth]{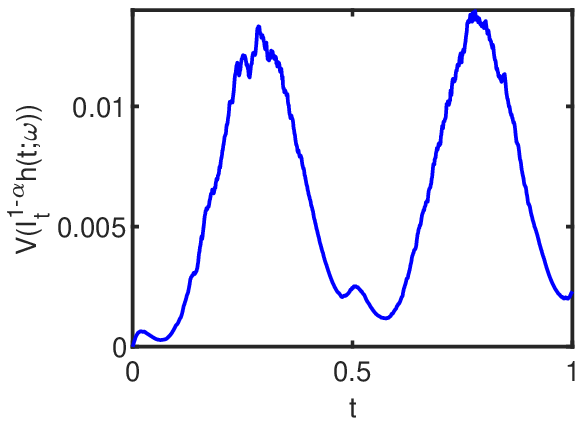}
  \end{minipage}%
  }%
  \subfigure[ $\hat V$ with $10^4$ realizations.]{
  \begin{minipage}[t]{0.33\textwidth}
  \centering
  \includegraphics[width=\textwidth]{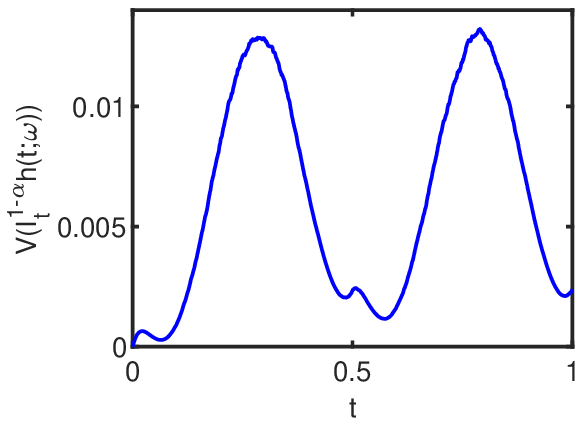}
  \end{minipage}%
  }%
  \subfigure[ $\hat V$ with $10^5$ realizations.]{
  \begin{minipage}[t]{0.33\textwidth}
  \centering
  \includegraphics[width=\textwidth]{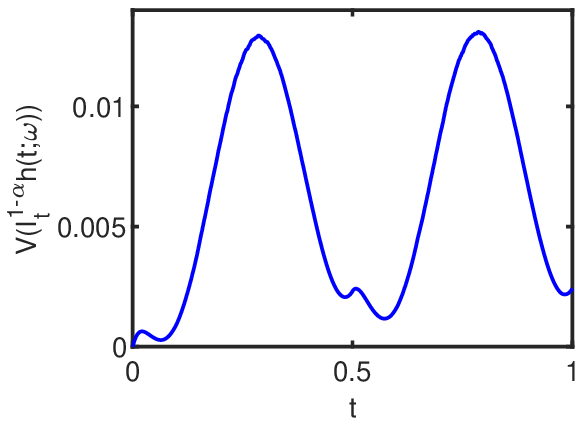}
  \end{minipage}%
  }%
  \quad
  \subfigure[ $|\hat g_2|$ with $10^3$ realizations.]{
  \begin{minipage}[t]{0.33\textwidth}
  \centering
  \includegraphics[width=\textwidth]{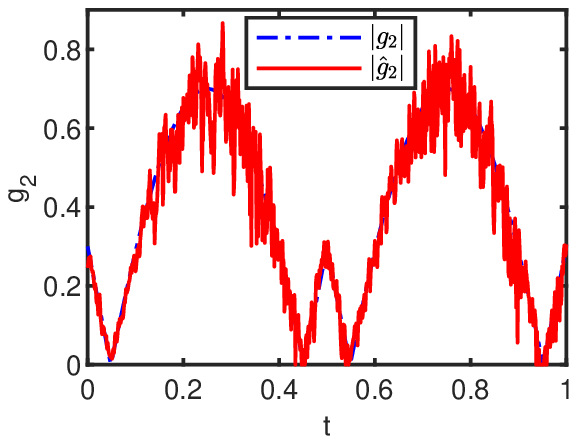}
  \end{minipage}%
  }%
  \subfigure[ $|\hat g_2|$ with $10^4$ realizations.]{
  \begin{minipage}[t]{0.33\textwidth}
  \centering
  \includegraphics[width=\textwidth]{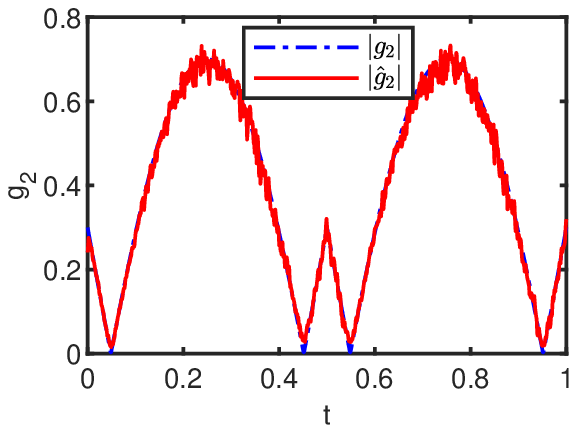}
  \end{minipage}%
  }%
  \subfigure[ $|\hat g_2|$ with $10^5$ realizations.]{
  \begin{minipage}[t]{0.33\textwidth}
  \centering
  \includegraphics[width=\textwidth]{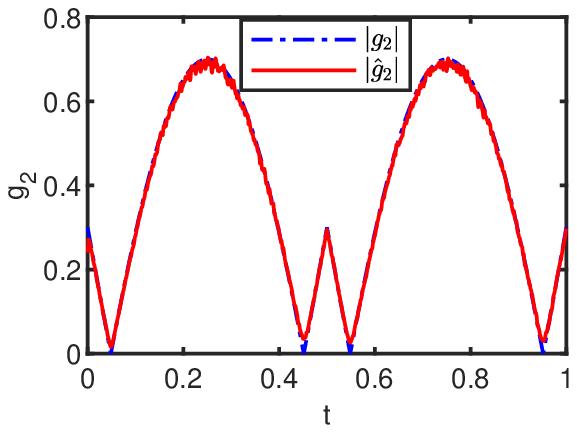}
  \end{minipage}%
  }%

  \centering
  \caption{Experiment $(e1)$, $\hat V$ (top) and $|g_2|,|\hat g_2|$ (bottom) 
  for different amount of samples. }\label{fig:V&g2&sample}
\end{figure}
\begin{figure}[htbp]
  \centering

  \subfigure[ $\hat V$ without mollification.]{
  \begin{minipage}[t]{0.33\textwidth}
  \centering
  \includegraphics[width=\textwidth]{e5_V_1000.eps}
  \end{minipage}%
  }%
  \subfigure[ $\hat V, J_\epsilon * \hat V$ when $\epsilon = 0.005$. ]{
  \begin{minipage}[t]{0.33\textwidth}
  \centering
  \includegraphics[width=\textwidth]{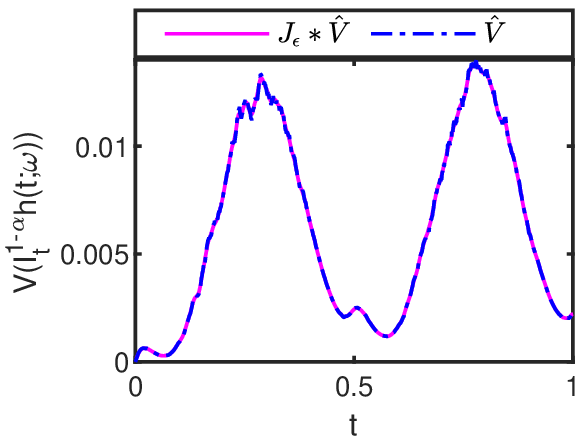}
  \end{minipage}%
  }%
  \subfigure[ $\hat V, J_\epsilon * \hat V$ when $\epsilon = 0.05$. ]{
  \begin{minipage}[t]{0.33\textwidth}
  \centering
  \includegraphics[width=\textwidth]{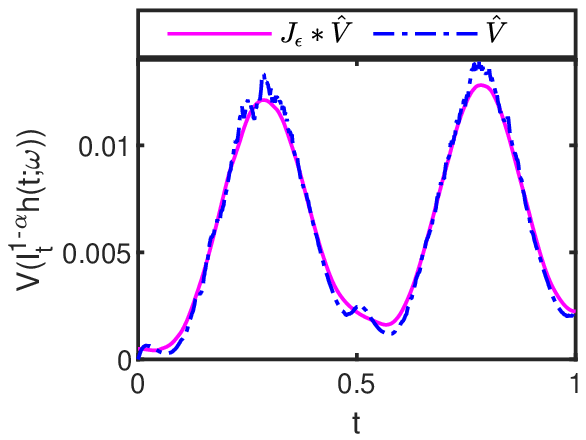}
  \end{minipage}%
  }%
  \quad
  \subfigure[ $|\hat g_2|$ without mollification. ]{
  \begin{minipage}[t]{0.33\textwidth}
  \centering
  \includegraphics[width=\textwidth]{e5_g2_1000.eps}
  \end{minipage}%
  }%
  \subfigure[ $|\hat g_2|$ when $\epsilon = 0.005$. ]{
  \begin{minipage}[t]{0.33\textwidth}
  \centering
  \includegraphics[width=\textwidth]{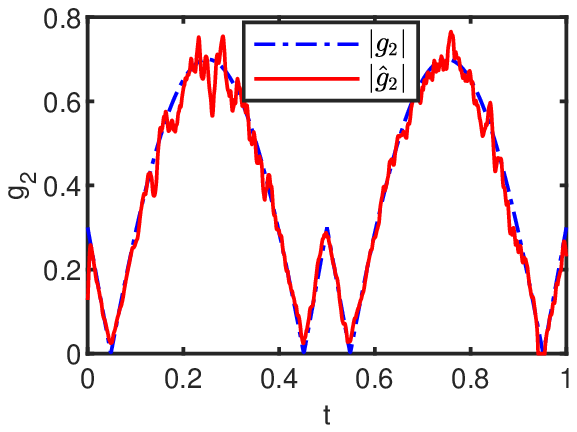}
  \end{minipage}%
  }%
  \subfigure[ $|\hat g_2|$ when $\epsilon = 0.05$. ]{
  \begin{minipage}[t]{0.33\textwidth}
  \centering
  \includegraphics[width=\textwidth]{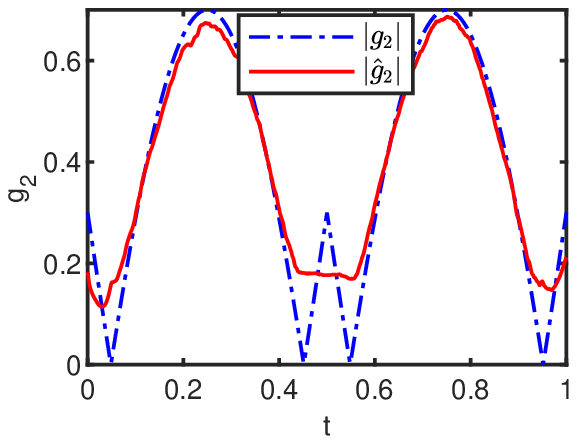}
  \end{minipage}%
  }%

  \centering
  \caption{Experiment $(e1)$, $\hat V, J_\epsilon *\hat V$ (top) and 
  $|g_2|, |\hat g_2|$ (bottom) for different $\epsilon$, $10^3$ 
  realizations.}
  \label{fig:V&g2&moleps}
\end{figure}

Now, we are interested in the dependence of the reconstruction results 
on the mollification parameter $\epsilon$. Still using $(e1)$, as is shown in Figure 
\ref{fig:moleps&err}, as the mollification parameter increases, 
the error decreases at the beginning but then grows up. This is because 
large $\epsilon$ will lead to oversmoothing, making the moments 
deviating from the true values substantially. 
More precisely, we choose different values marked on Figure 
\ref{fig:moleps&err} to illustrate that too small or too large 
values of $\epsilon$ are not desirable, see Figure \ref{fig:g&moleps}.

\begin{figure}[htbp]
  \centering

  \subfigure[ $er(g_1)$ with various $\epsilon$.]{
  \begin{minipage}[t]{0.5\textwidth}
  \centering
  \includegraphics[width=\textwidth]{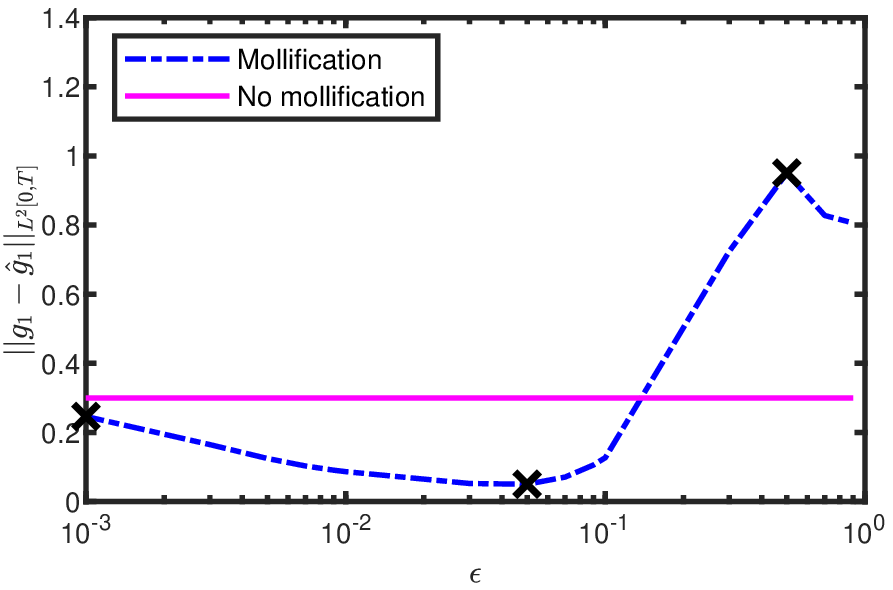}
  \end{minipage}%
  }%
  \subfigure[ $er(|g_2|)$ with various $\epsilon$.]{
  \begin{minipage}[t]{0.5\textwidth}
  \centering
  \includegraphics[width=\textwidth]{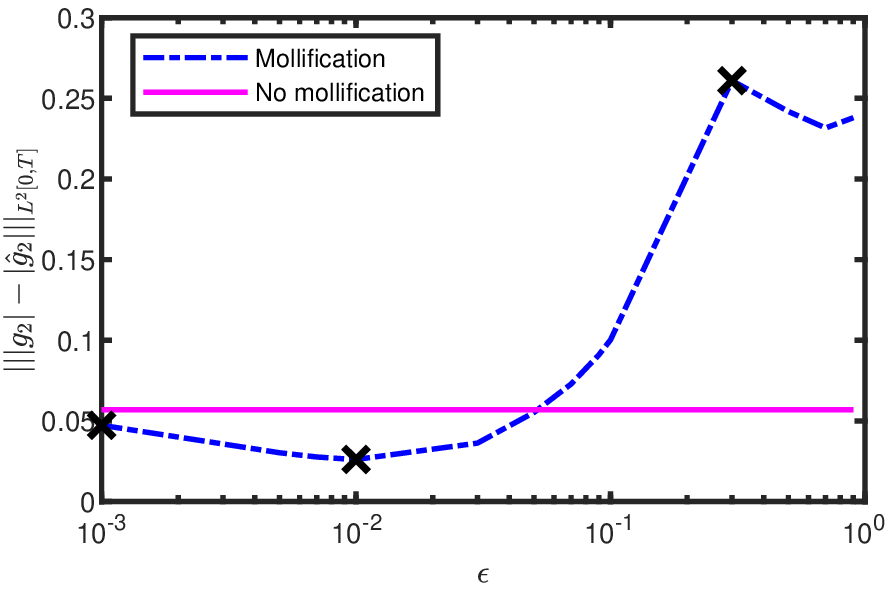}
  \end{minipage}%
  }%

  \centering
  \caption{Experiment $(e1)$, $er(g_1)$ and $er(|g_2|)$ under different 
  $\epsilon$.}\label{fig:moleps&err}
\end{figure}
\begin{figure}[htbp]
  \centering

  \subfigure[ $\hat g_1$ when $\epsilon=0.001$. ]{
  \begin{minipage}[t]{0.33\textwidth}
  \centering
  \includegraphics[width=\textwidth]{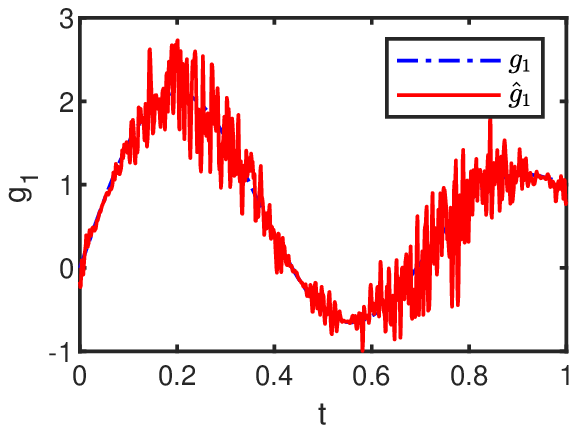}
  \end{minipage}%
  }%
  \subfigure[$\hat g_1$ when $\epsilon=0.05$.]{
  \begin{minipage}[t]{0.33\textwidth}
  \centering
  \includegraphics[width=\textwidth]{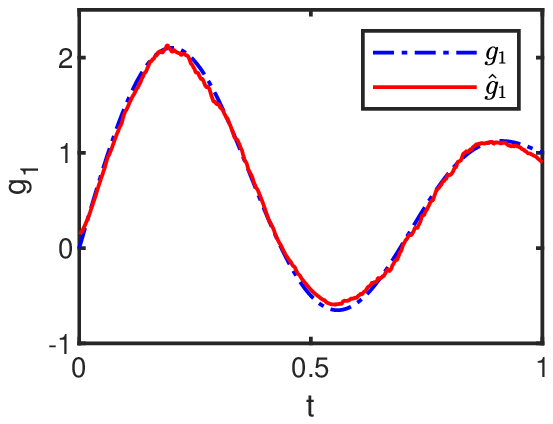}
  \end{minipage}%
  }%
  \subfigure[$\hat g_1$ when $\epsilon=0.5$.]{
  \begin{minipage}[t]{0.33\textwidth}
  \centering
  \includegraphics[width=\textwidth]{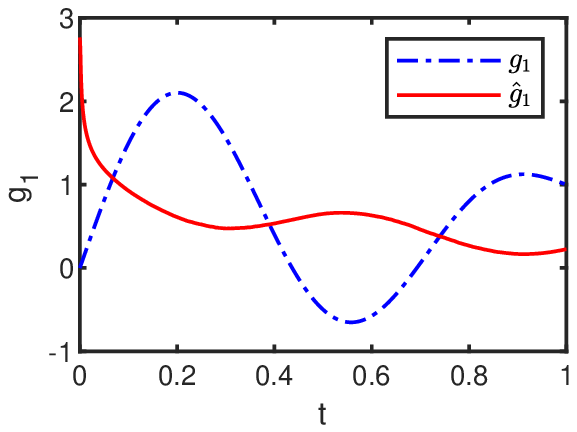}
  \end{minipage}%
  }%
  \quad
  \subfigure[ $|\hat g_2|$ when $\epsilon=0.001$. ]{
  \begin{minipage}[t]{0.33\textwidth}
  \centering
  \includegraphics[width=\textwidth]{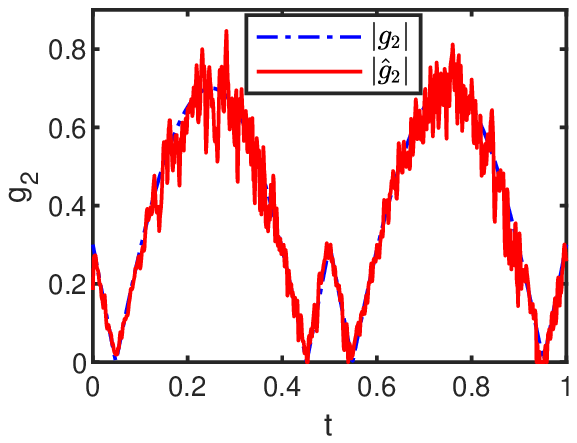}
  \end{minipage}%
  }%
  \subfigure[ $|\hat g_2|$ when $\epsilon=0.01$. ]{
  \begin{minipage}[t]{0.33\textwidth}
  \centering
  \includegraphics[width=\textwidth]{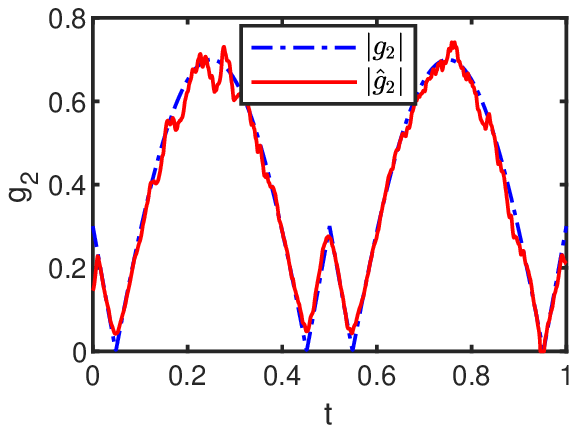}
  \end{minipage}%
  }%
  \subfigure[ $|\hat g_2|$ when $\epsilon=0.3$. ]{
  \begin{minipage}[t]{0.33\textwidth}
  \centering
  \includegraphics[width=\textwidth]{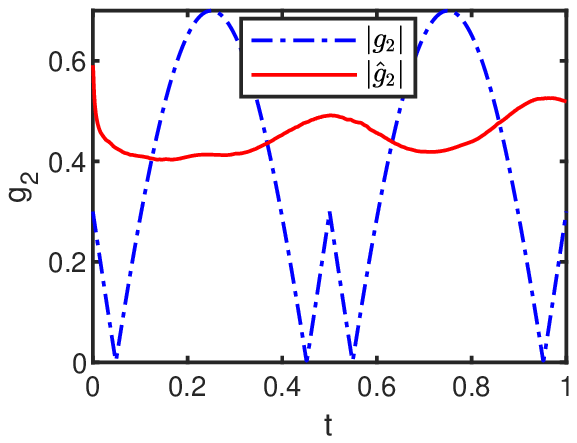}
  \end{minipage}%
  }%

  \centering
  \caption{Experiment $(e1)$, $g_1, \hat g_1$ (top) and 
  $|g_2|, |\hat g_2|$ (bottom) for different $\epsilon$.}
  \label{fig:g&moleps}
\end{figure}

\subsection{Numerical experiments}\label{sec:Num_ob}
In this subsection, we will focus on the 
adaptation of our algorithm to different sizes of observation noise. 
Mostly the observation noises are raised by system observations, and 
obey a normal distribution, denoted as 
$\xi \sim \mathcal N(0,\sigma^2)$. As stated in \cite{DeFinetti-36}, 
from the Bayesian standpoint, the probability of the random variable 
$\xi \sim \mathcal N(0,\sigma^2)$ belongs to the interval 
$ [-2.58\sigma, 2.58\sigma]$ is 99\%. Thus, we will use the bound 
$2.58\sigma$ to evaluate the size of observation noises.

Figures \ref{fig:e1_ob&err}-\ref{fig:e5_g&ob} give 
the reconstructions for experiments $(ej),\ 1\le j\le5$. 
We can see that our algorithm performs well even for 
discontinuous cases. This means the continuous restrictions on the 
unknowns $g_1,g_2$ may be weakened in the numerical aspect. 
Note that the amount of samples is $10^3$ and we adopt 
$\epsilon = 0.05$, which is chosen according to the time-step 
$\Delta t=10^{-3}$.
\begin{figure}[htbp]
  \centering

  \subfigure[ $er(g_1)$ with various $\sigma$. ]{
  \begin{minipage}[t]{0.5\textwidth}
  \centering
  \includegraphics[width=0.9\textwidth]{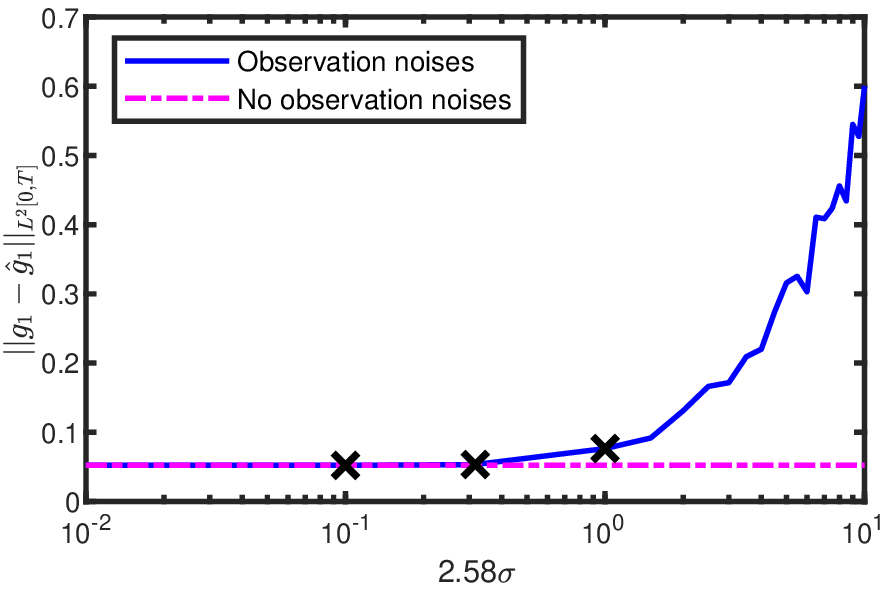}
  \end{minipage}%
  }%
  \subfigure[ $er(|g_2|)$ with various $\sigma$. ]{
  \begin{minipage}[t]{0.5\textwidth}
  \centering
  \includegraphics[width=0.9\textwidth]{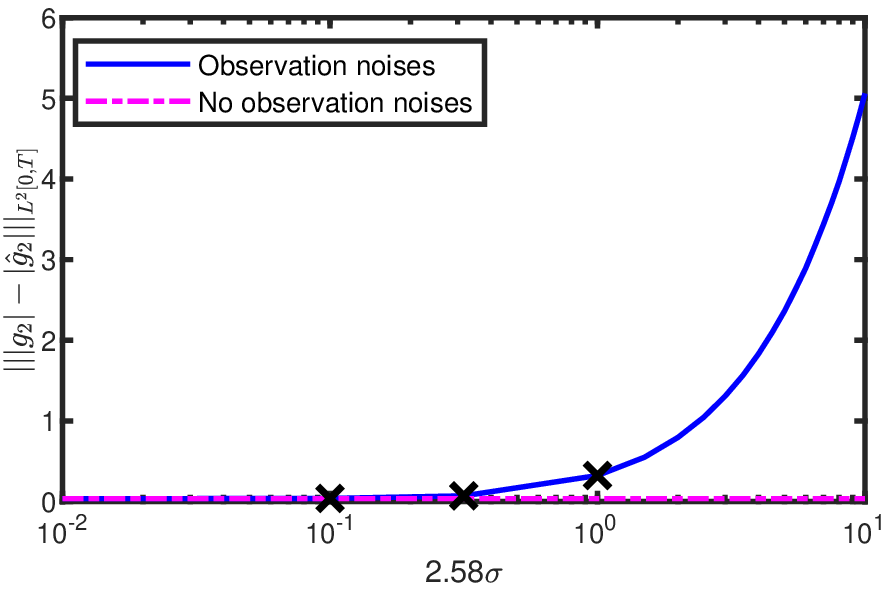}
  \end{minipage}%
  }%

  \centering
  \caption{ Experiment $(e1)$, $er(g_1)$ (left) and $er(|g_2|)$ (right) 
  for different $\sigma$.}\label{fig:e1_ob&err}
\end{figure}
\begin{figure}[htbp]
  \centering

  \subfigure[$\hat g_1$ when $2.58\sigma =10^{-1}$. ]{
  \begin{minipage}[t]{0.33\textwidth}
  \centering
  \includegraphics[width=\textwidth]{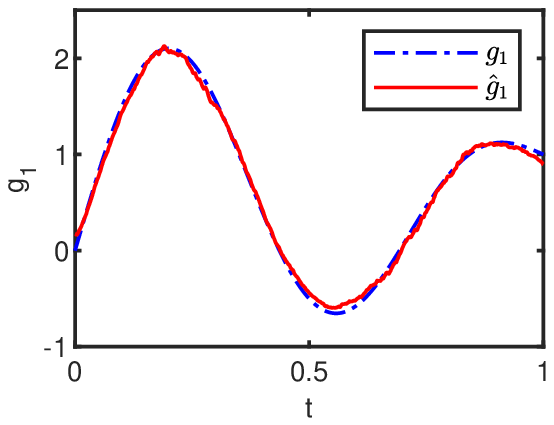}
  \end{minipage}%
  }%
  \subfigure[ $\hat g_1$ when $2.58\sigma =10^{-\frac{1}{2}}$.  ]{
  \begin{minipage}[t]{0.33\textwidth}
  \centering
  \includegraphics[width=\textwidth]{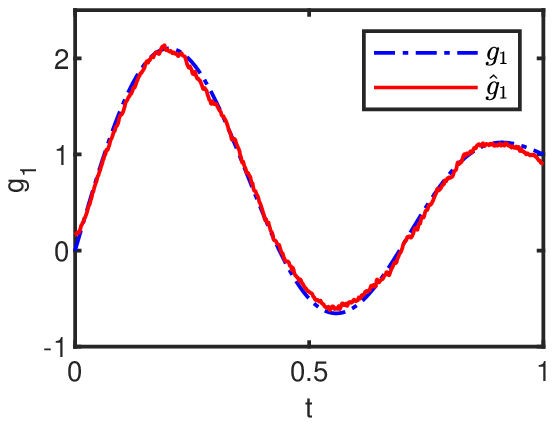}
  \end{minipage}%
  }%
  \subfigure[ $\hat g_1$ when $2.58\sigma =1 $.  ]{
  \begin{minipage}[t]{0.33\textwidth}
  \centering
  \includegraphics[width=\textwidth]{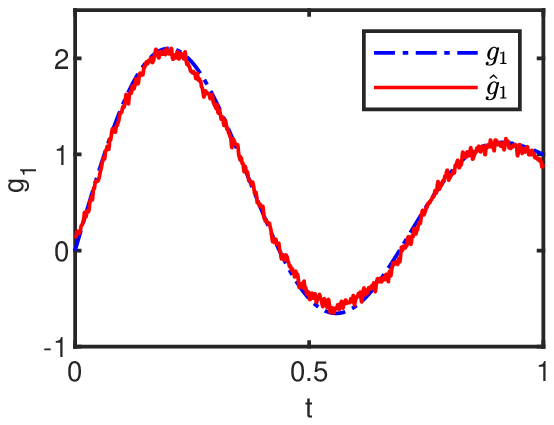}
  \end{minipage}%
  }%
  \quad
  \subfigure[$|\hat g_2|$ when $2.58\sigma =10^{-1} $. ]{
  \begin{minipage}[t]{0.33\textwidth}
  \centering
  \includegraphics[width=\textwidth]{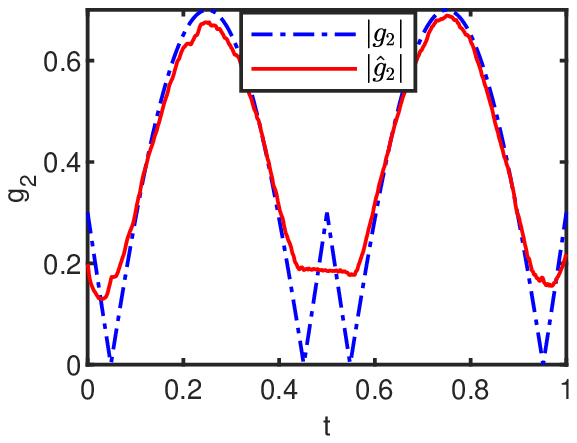}
  \end{minipage}%
  }%
  \subfigure[$|\hat g_2|$ when $2.58\sigma = 10^{-\frac{1}{2}} $. ]{
  \begin{minipage}[t]{0.33\textwidth}
  \centering
  \includegraphics[width=\textwidth]{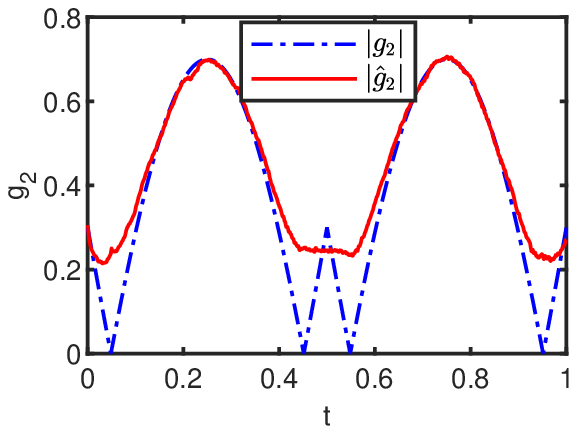}
  \end{minipage}%
  }%
  \subfigure[$|\hat g_2|$ when $2.58\sigma =1 $. ]{
  \begin{minipage}[t]{0.33\textwidth}
  \centering
  \includegraphics[width=\textwidth]{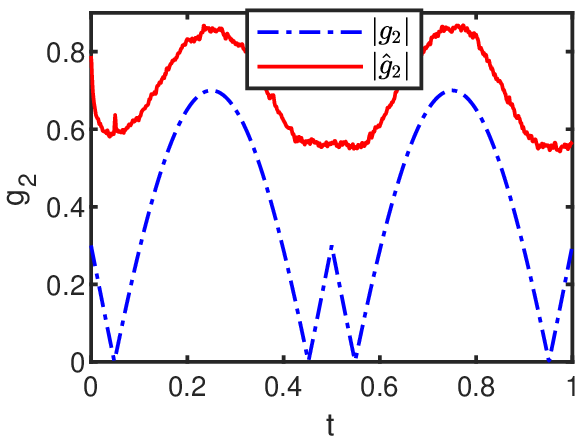}
  \end{minipage}%
  }%

  \centering
  \caption{ Experiment $(e1)$, $g_1, \hat g_1$ (top) and 
  $|g_2|, |\hat g_2|$ (bottom) for different $\sigma$.}\label{fig:e1_g&ob}
\end{figure}
\begin{figure}[htbp]
  \centering

  \subfigure[ $er(g_1)$ with various $\sigma$. ]{
  \begin{minipage}[t]{0.5\textwidth}
  \centering
  \includegraphics[width=\textwidth]{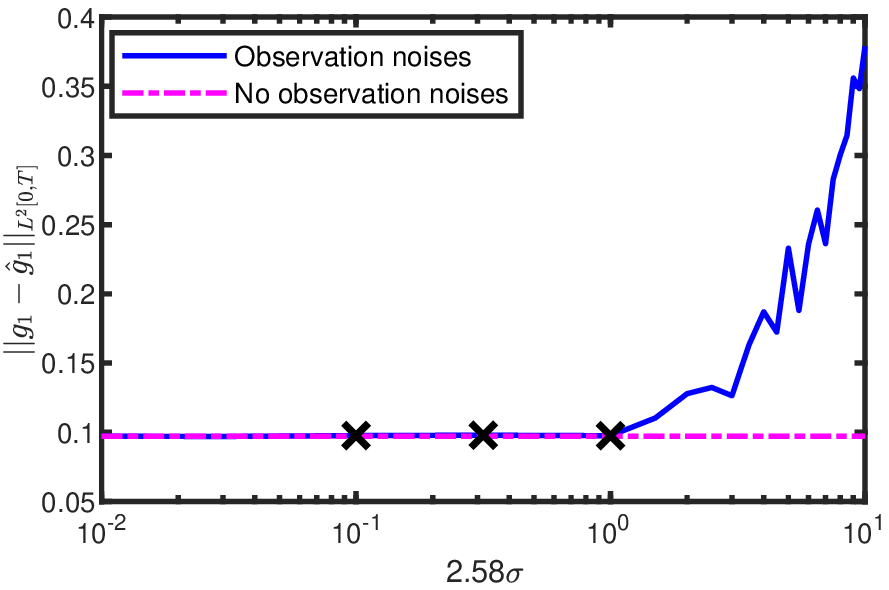}
  \end{minipage}%
  }%
  \subfigure[ $er(|g_2|)$ with various $\sigma$. ]{
  \begin{minipage}[t]{0.5\textwidth}
  \centering
  \includegraphics[width=\textwidth]{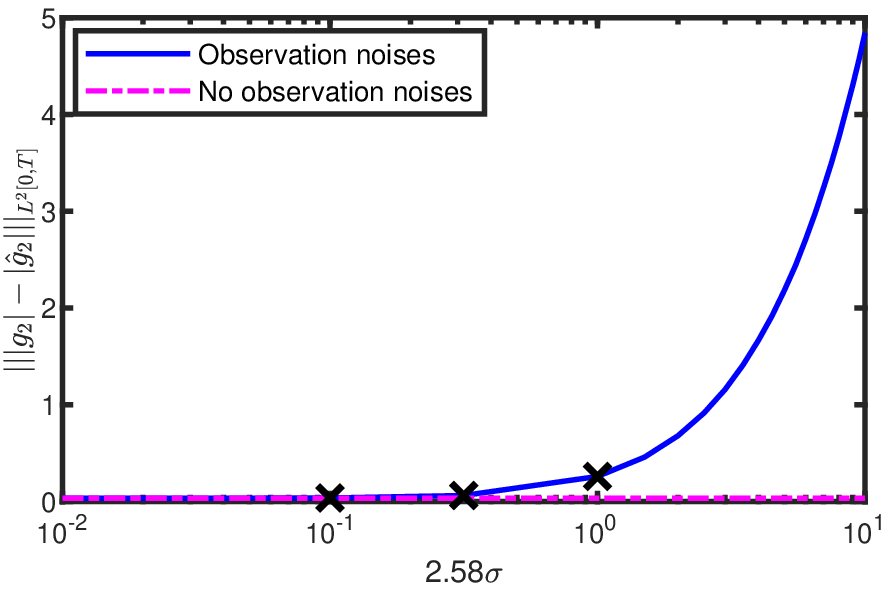}
  \end{minipage}%
  }%

  \centering
  \caption{ Experiment $(e2)$, $er(g_1)$ (left) and $er(|g_2|)$ (right) 
  for different $\sigma$.}\label{fig:e2_ob&err}
\end{figure}
\begin{figure}[htbp]
  \centering

  \subfigure[$\hat g_1$ when $2.58\sigma =10^{-1}$. ]{
  \begin{minipage}[t]{0.33\textwidth}
  \centering
  \includegraphics[width=\textwidth]{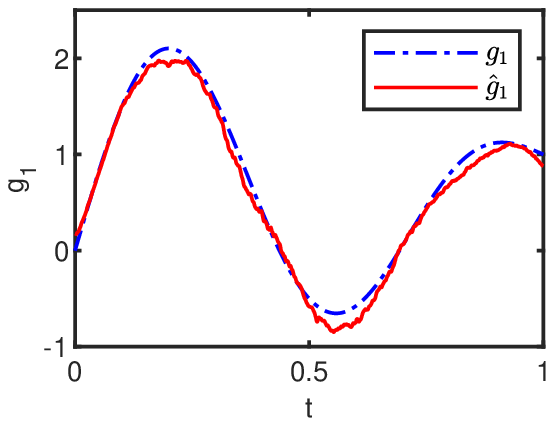}
  \end{minipage}%
  }%
  \subfigure[ $\hat g_1$ when $2.58\sigma =10^{-\frac{1}{2}}$.  ]{
  \begin{minipage}[t]{0.33\textwidth}
  \centering
  \includegraphics[width=\textwidth]{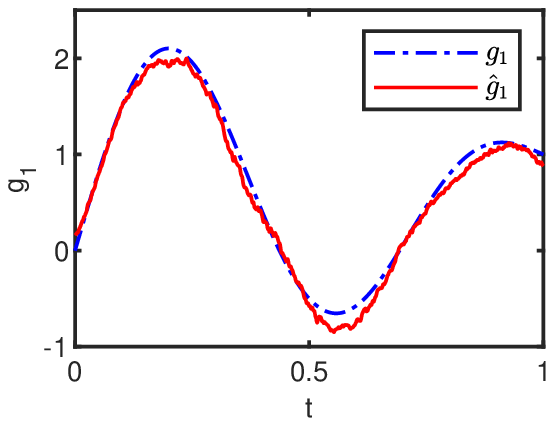}
  \end{minipage}%
  }%
  \subfigure[ $\hat g_1$ when $2.58\sigma =1 $.  ]{
  \begin{minipage}[t]{0.33\textwidth}
  \centering
  \includegraphics[width=\textwidth]{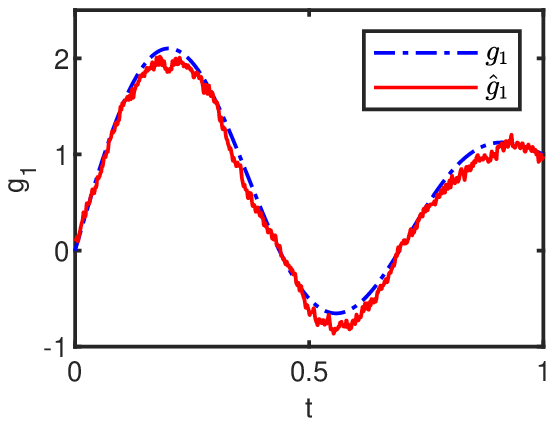}
  \end{minipage}%
  }%
  \quad
  \subfigure[$|\hat g_2|$ when $2.58\sigma =10^{-1} $. ]{
  \begin{minipage}[t]{0.33\textwidth}
  \centering
  \includegraphics[width=\textwidth]{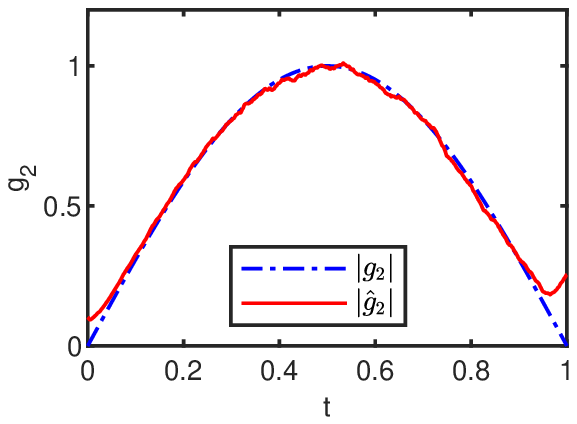}
  \end{minipage}%
  }%
  \subfigure[$|\hat g_2|$ when $2.58\sigma = 10^{-\frac{1}{2}} $. ]{
  \begin{minipage}[t]{0.33\textwidth}
  \centering
  \includegraphics[width=\textwidth]{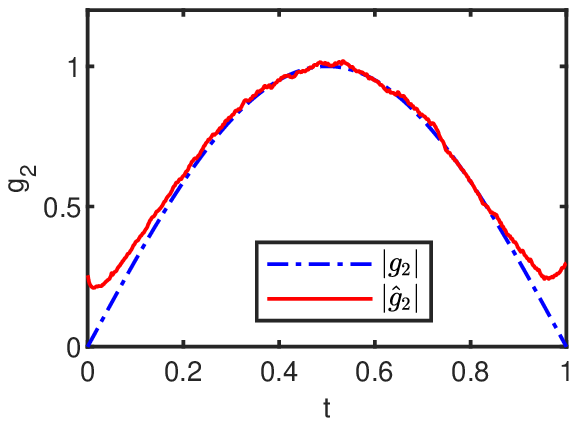}
  \end{minipage}%
  }%
  \subfigure[$|\hat g_2|$ when $2.58\sigma =1 $. ]{
  \begin{minipage}[t]{0.33\textwidth}
  \centering
  \includegraphics[width=\textwidth]{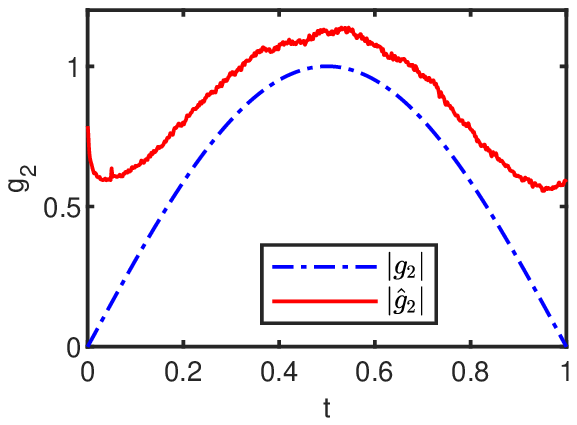}
  \end{minipage}%
  }%

  \centering
  \caption{ Experiment $(e2)$, $g_1, \hat g_1$ (top) and 
  $|g_2|, |\hat g_2|$ (bottom) for different $\sigma$. }\label{fig:e2_g&ob}
\end{figure}
%
%
%
\begin{figure}[htbp]
  \centering

  \subfigure[ $er(g_1)$ with various $\sigma$. ]{
  \begin{minipage}[t]{0.5\textwidth}
  \centering
  \includegraphics[width=\textwidth]{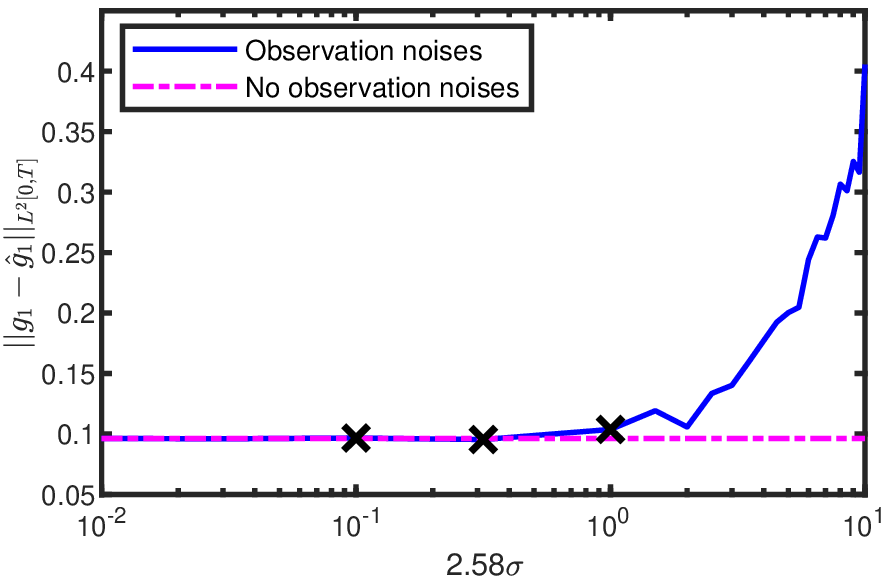}
  \end{minipage}%
  }%
  \subfigure[ $er(|g_2|)$ with various $\sigma$. ]{
  \begin{minipage}[t]{0.5\textwidth}
  \centering
  \includegraphics[width=\textwidth]{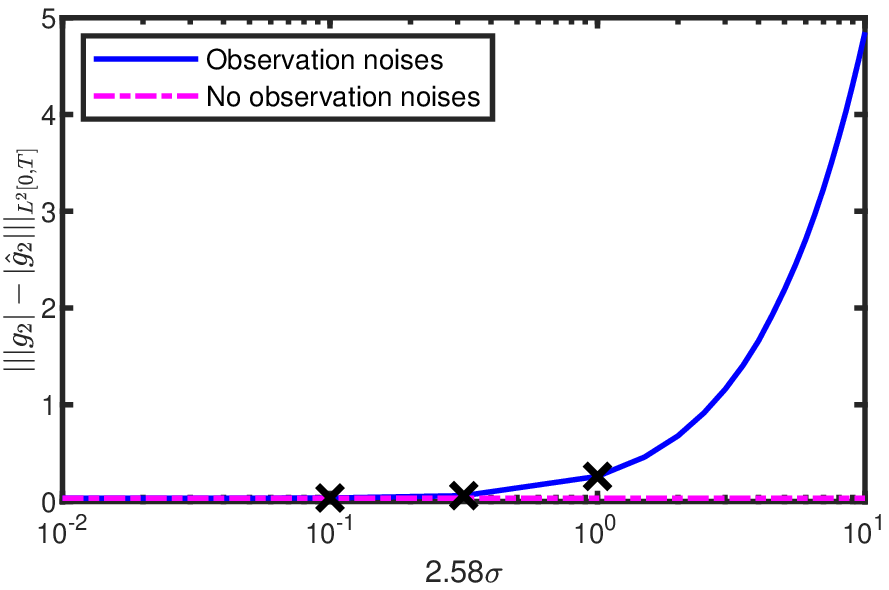}
  \end{minipage}%
  }%

  \centering
  \caption{ Experiment $(e3)$, $er(g_1)$ (left) and $er(|g_2|)$ (right) 
  for different $\sigma$.}\label{fig:e3_ob&err}
\end{figure}
\begin{figure}[htbp]
  \centering

  \subfigure[$\hat g_1$ when $2.58\sigma =10^{-1}$. ]{
  \begin{minipage}[t]{0.33\textwidth}
  \centering
  \includegraphics[width=\textwidth]{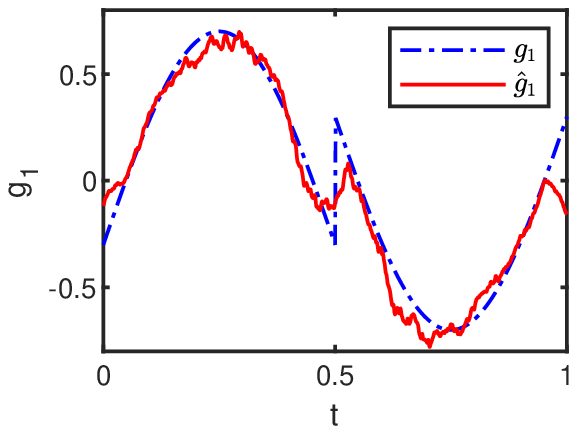}
  \end{minipage}%
  }%
  \subfigure[ $\hat g_1$ when $2.58\sigma =10^{-\frac{1}{2}}$.  ]{
  \begin{minipage}[t]{0.33\textwidth}
  \centering
  \includegraphics[width=\textwidth]{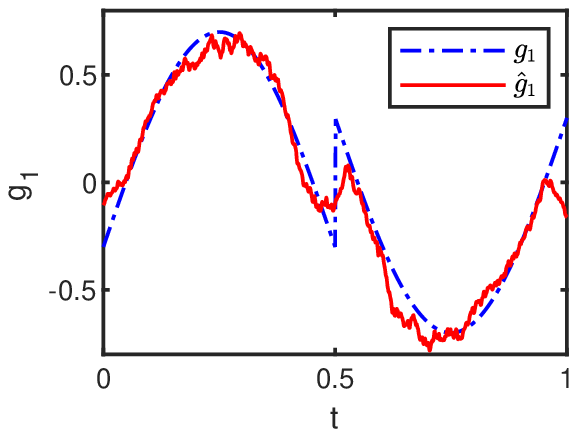}
  \end{minipage}%
  }%
  \subfigure[ $\hat g_1$ when $2.58\sigma =1 $.  ]{
  \begin{minipage}[t]{0.33\textwidth}
  \centering
  \includegraphics[width=\textwidth]{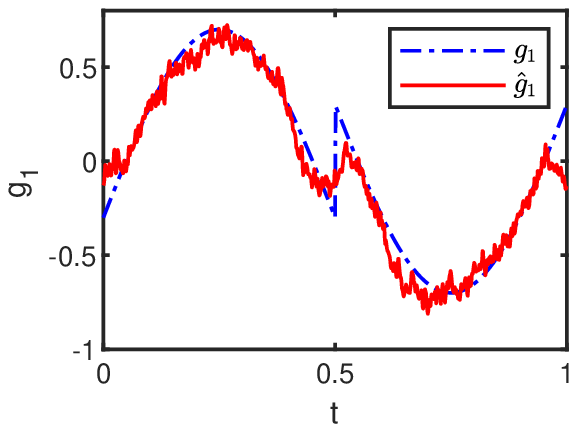}
  \end{minipage}%
  }%
  \quad
  \subfigure[$|\hat g_2|$ when $2.58\sigma =10^{-1} $. ]{
  \begin{minipage}[t]{0.33\textwidth}
  \centering
  \includegraphics[width=\textwidth]{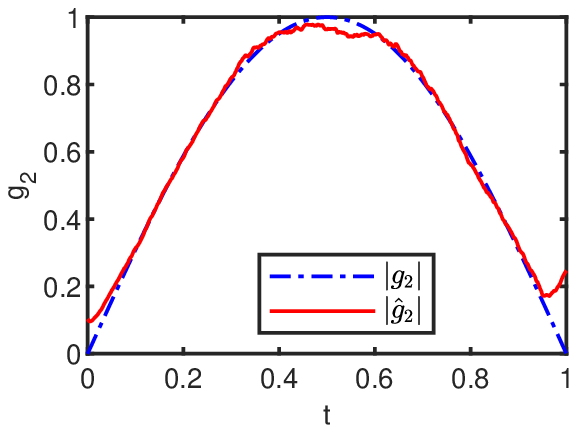}
  \end{minipage}%
  }%
  \subfigure[$|\hat g_2|$ when $2.58\sigma = 10^{-\frac{1}{2}} $. ]{
  \begin{minipage}[t]{0.33\textwidth}
  \centering
  \includegraphics[width=\textwidth]{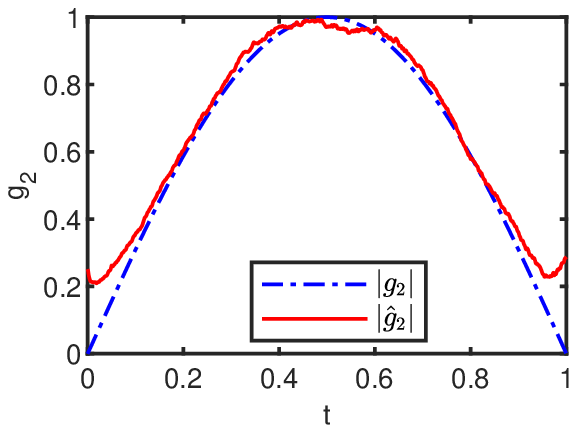}
  \end{minipage}%
  }%
  \subfigure[$|\hat g_2|$ when $2.58\sigma =1 $. ]{
  \begin{minipage}[t]{0.33\textwidth}
  \centering
  \includegraphics[width=\textwidth]{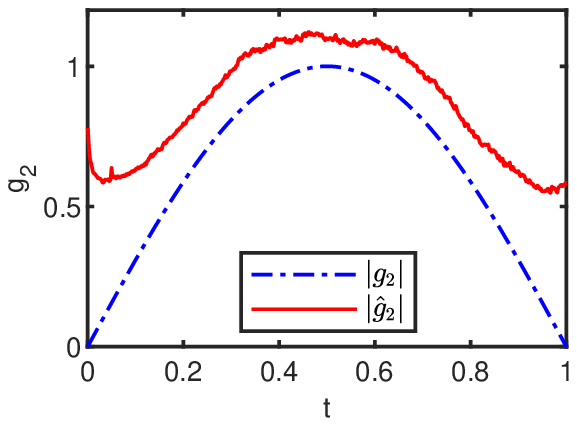}
  \end{minipage}%
  }%

  \centering
  \caption{ Experiment $(e3)$, $g_1, \hat g_1$ (top) and 
  $|g_2|, |\hat g_2|$ (bottom) for different $\sigma$.}\label{fig:e3_g&ob}
\end{figure}
\begin{figure}[htbp]
  \centering

  \subfigure[ $er(g_1)$ with various $\sigma$. ]{
  \begin{minipage}[t]{0.5\textwidth}
  \centering
  \includegraphics[width=\textwidth]{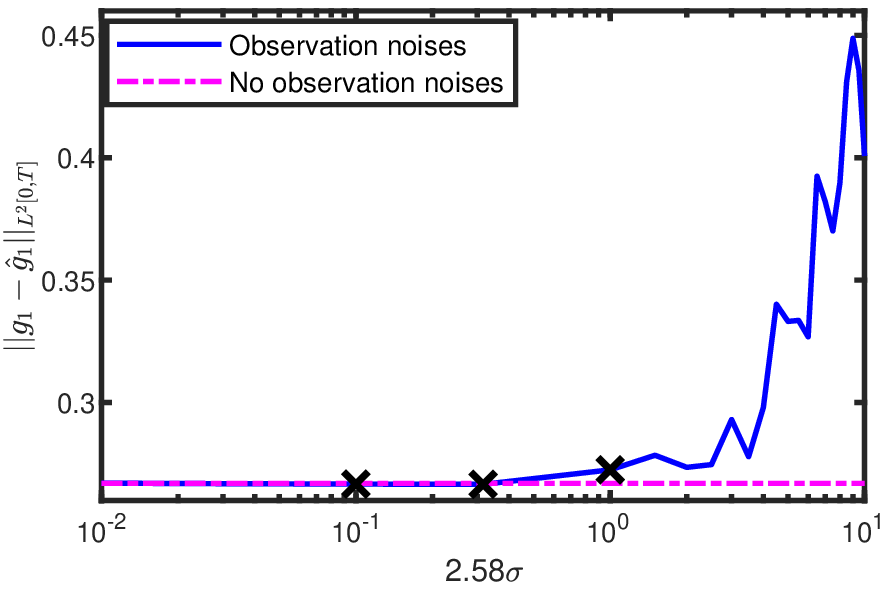}
  \end{minipage}%
  }%
  \subfigure[ $er(|g_2|)$ with various $\sigma$. ]{
  \begin{minipage}[t]{0.5\textwidth}
  \centering
  \includegraphics[width=\textwidth]{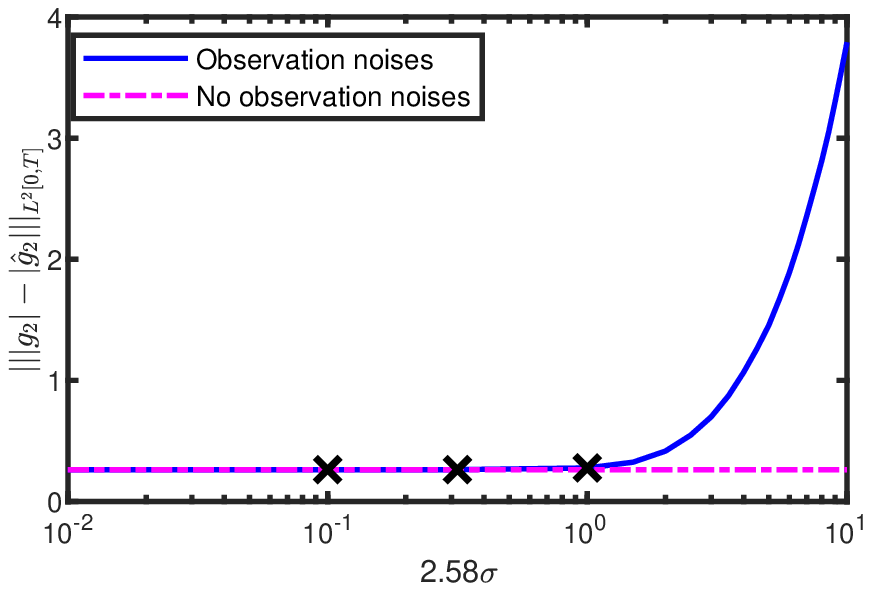}
  \end{minipage}%
  }%

  \centering
  \caption{ Experiment $(e4)$, $er(g_1)$ (left) and $er(|g_2|)$ (right) 
  for different $\sigma$.}\label{fig:e4_ob&err}
\end{figure}
\begin{figure}[htbp]
  \centering

  \subfigure[$\hat g_1$ when $2.58\sigma =10^{-1}$. ]{
  \begin{minipage}[t]{0.33\textwidth}
  \centering
  \includegraphics[width=\textwidth]{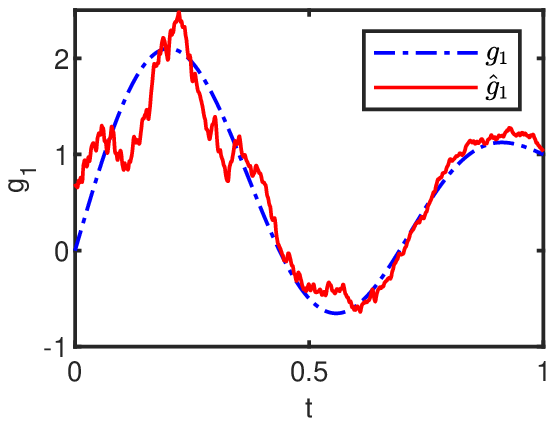}
  \end{minipage}%
  }%
  \subfigure[ $\hat g_1$ when $2.58\sigma =10^{-\frac{1}{2}}$.  ]{
  \begin{minipage}[t]{0.33\textwidth}
  \centering
  \includegraphics[width=\textwidth]{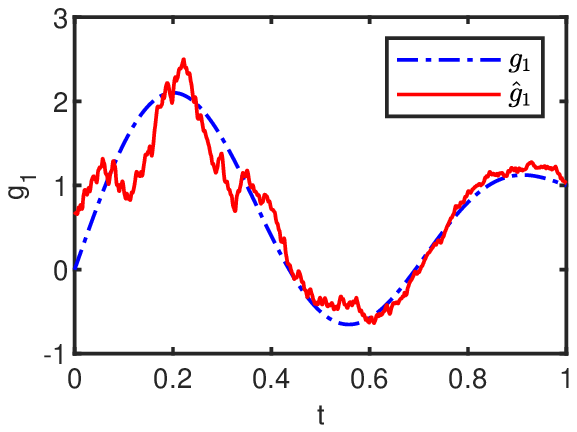}
  \end{minipage}%
  }%
  \subfigure[ $\hat g_1$ when $2.58\sigma =1 $.  ]{
  \begin{minipage}[t]{0.33\textwidth}
  \centering
  \includegraphics[width=\textwidth]{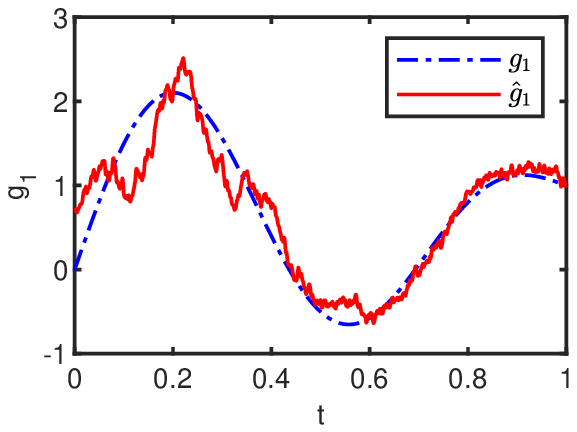}
  \end{minipage}%
  }%
  \quad
  \subfigure[$|\hat g_2|$ when $2.58\sigma =10^{-1} $. ]{
  \begin{minipage}[t]{0.33\textwidth}
  \centering
  \includegraphics[width=\textwidth]{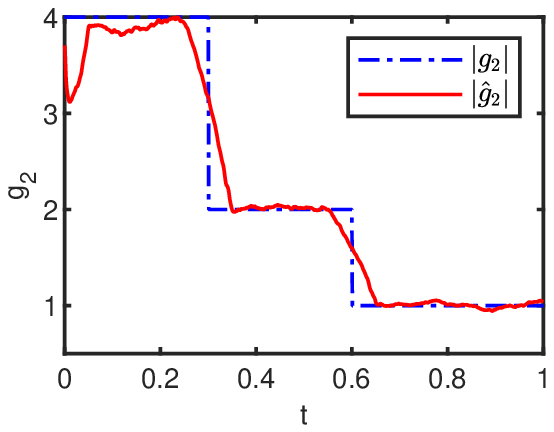}
  \end{minipage}%
  }%
  \subfigure[$|\hat g_2|$ when $2.58\sigma = 10^{-\frac{1}{2}} $. ]{
  \begin{minipage}[t]{0.33\textwidth}
  \centering
  \includegraphics[width=\textwidth]{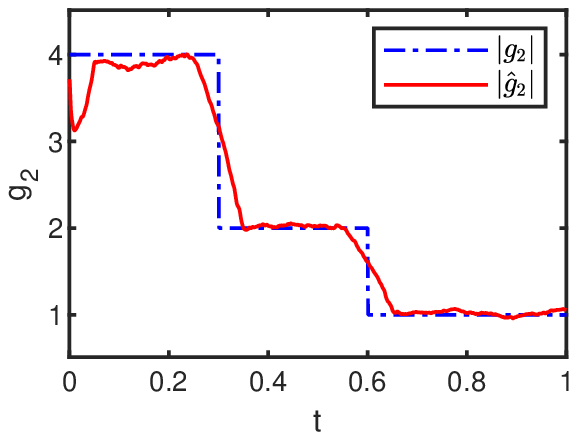}
  \end{minipage}%
  }%
  \subfigure[$|\hat g_2|$ when $2.58\sigma =1 $. ]{
  \begin{minipage}[t]{0.33\textwidth}
  \centering
  \includegraphics[width=\textwidth]{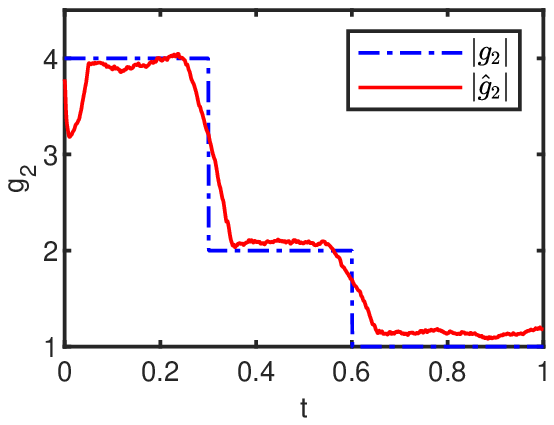}
  \end{minipage}%
  }%

  \centering
  \caption{ Experiment $(e4)$, $g_1, \hat g_1$ (top) and 
  $|g_2|, |\hat g_2|$ (bottom) for different $\sigma$.}\label{fig:e4_g&ob}
\end{figure}
\begin{figure}[htbp]
  \centering

  \subfigure[ $er(g_1)$ with various $\sigma$. ]{
  \begin{minipage}[t]{0.5\textwidth}
  \centering
  \includegraphics[width=\textwidth]{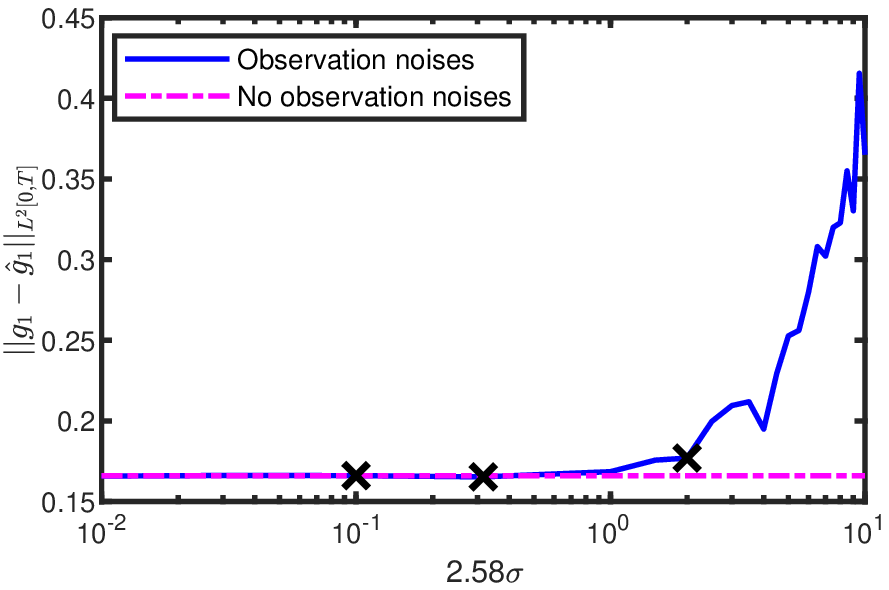}
  \end{minipage}%
  }%
  \subfigure[ $er(|g_2|)$ with various $\sigma$. ]{
  \begin{minipage}[t]{0.5\textwidth}
  \centering
  \includegraphics[width=\textwidth]{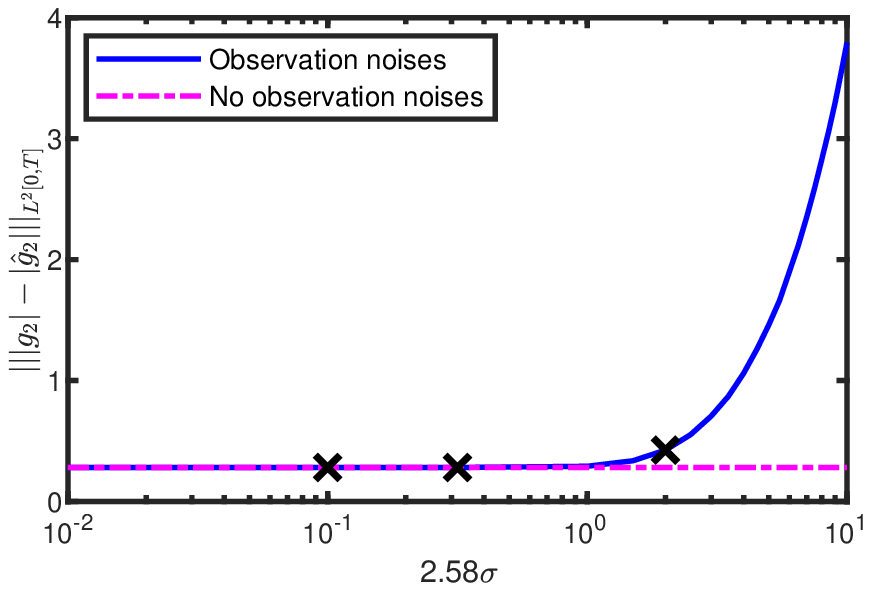}
  \end{minipage}%
  }%

  \centering
  \caption{ Experiment $(e5)$, $er(g_1)$ (left) and $er(|g_2|)$ (right) 
  for different $\sigma$.}\label{fig:e5_ob&err}
\end{figure}
\begin{figure}[htbp]
  \centering

  \subfigure[$\hat g_1$ when $2.58\sigma =10^{-1}$. ]{
  \begin{minipage}[t]{0.33\textwidth}
  \centering
  \includegraphics[width=\textwidth]{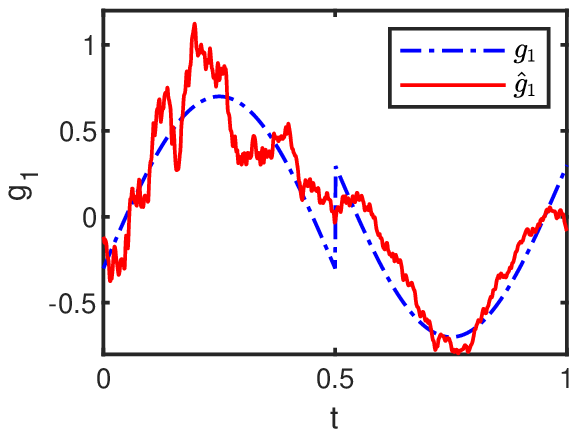}
  \end{minipage}%
  }%
  \subfigure[ $\hat g_1$ when $2.58\sigma =10^{-\frac{1}{2}}$.  ]{
  \begin{minipage}[t]{0.33\textwidth}
  \centering
  \includegraphics[width=\textwidth]{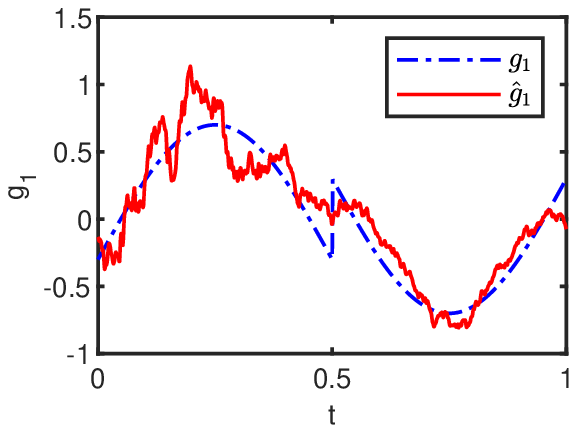}
  \end{minipage}%
  }%
  \subfigure[ $\hat g_1$ when $2.58\sigma = 2 $.  ]{
  \begin{minipage}[t]{0.33\textwidth}
  \centering
  \includegraphics[width=\textwidth]{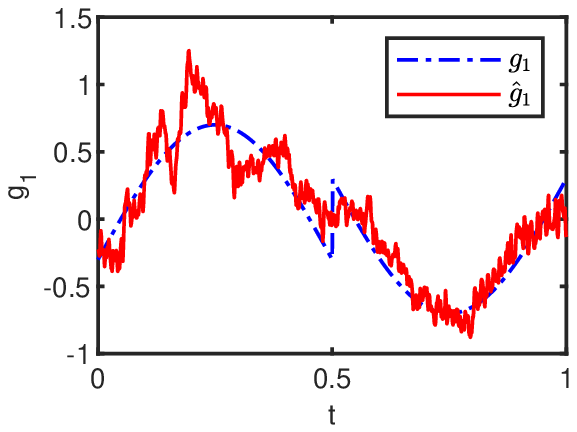}
  \end{minipage}%
  }%
  \quad
  \subfigure[$|\hat g_2|$ when $2.58\sigma =10^{-1} $. ]{
  \begin{minipage}[t]{0.33\textwidth}
  \centering
  \includegraphics[width=\textwidth]{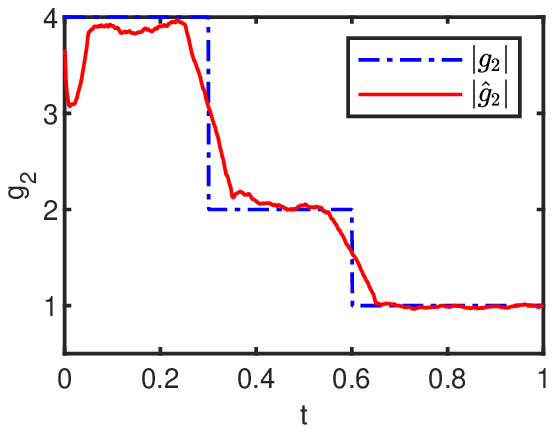}
  \end{minipage}%
  }%
  \subfigure[$|\hat g_2|$ when $2.58\sigma = 10^{-\frac{1}{2}} $. ]{
  \begin{minipage}[t]{0.33\textwidth}
  \centering
  \includegraphics[width=\textwidth]{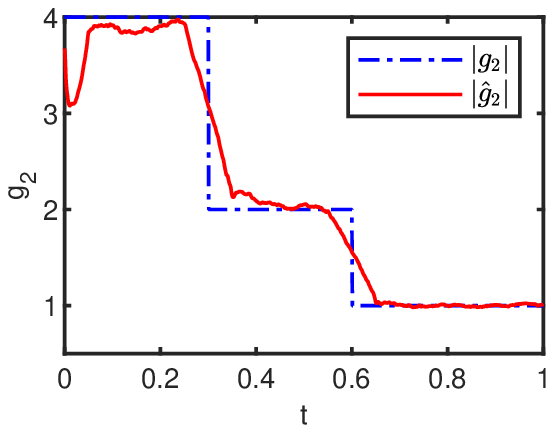}
  \end{minipage}%
  }%
  \subfigure[$|\hat g_2|$ when $2.58\sigma = 2 $. ]{
  \begin{minipage}[t]{0.33\textwidth}
  \centering
  \includegraphics[width=\textwidth]{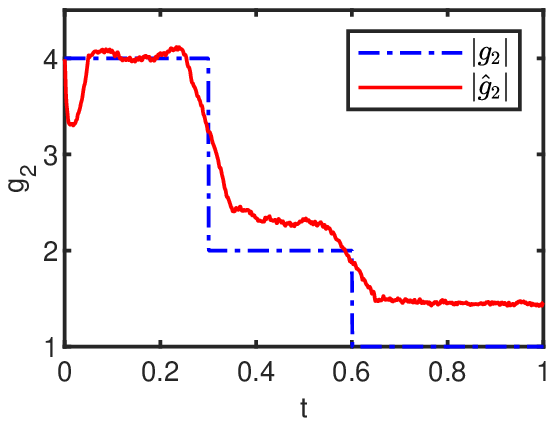}
  \end{minipage}%
  }%

  \centering
  \caption{ Experiment $(e5)$, $g_1, \hat g_1$ (top) and 
  $|g_2|, |\hat g_2|$ (bottom) for different $\sigma$.}\label{fig:e5_g&ob}
\end{figure}

\section*{Acknowledgement}
The authors are indebted to Dr. Jin Cheng for his assistance. 
The first author was supported by National Natural Science 
Foundation of China, grant no.11971121, and China Scholarship Council, 
no.201906100094. The third author was supported by Academy of Finland, grants 284715, 
312110 and the Atmospheric mathematics project of University of 
Helsinki.

\bibliographystyle{abbrvurl} 
\bibliography{SFDE_time}

\end{document}